\theoremstyle{plain}
\newtheorem{proposition}{Proposition}[section]
\newtheorem{theorem}{Theorem}[section]
\newtheorem{lemma}{Lemma}[section]
\newtheorem{corollary}{Corollary}[section]
\newtheorem{example}{Example}[section]
\numberwithin{equation}{section}
\begin{document}
\begin{center} {\bf Extensions and variations of Andrews-Merca identities}\end{center}
%\medskip
\begin{center}
 Darlison Nyirenda$^{1}$%\footnote{Corresponding Author: Darlison.Nyirenda@wits.ac.za} 
and
Beaullah Mugwangwavari$^{2}$%\footnote{712040@students.wits.ac.za}
 \vspace{0.5cm} \\
$^{1}$  The John Knopfmacher Centre for Applicable Analysis and Number Theory, 	University of the Witwatersrand, P.O. Wits 2050, Johannesburg, South Africa.\\
$^{2}$ School of Mathematics, University of the Witwatersrand, P. O. Wits 2050, Johannesburg, South Africa.\\
e-mails: darlison.nyirenda@wits.ac.za, 712040@students.wits.ac.za\\

\end{center}
\begin{abstract}
Recently, Andrews and Merca have given a new combinatorial interpretation of the total number of even parts in all partitions of $n$ into distinct parts. We generalise this result and consider many more variations of their work. We also highlight some connections with the work of Fu and Tang.
\end{abstract}
\textbf{Key words}: partition; bijection; generating function.\\
\textbf{MSC 2010}: 05A15, 05A17, 05A19, 05A30, 11P81.   
\section{Introduction}
\noindent A partition of a positive integer $n$ is a representation $ \lambda = \lambda_{1} + \lambda_{2} + \hdots + \lambda_{r}$ where $\lambda_{1} \geq \lambda_{2} \geq \cdots \geq \lambda_{r}\geq 1$ and $ \sum\limits_{i=1}^{r} \lambda_{i} = n $.  A more compact notation for $\lambda$ is $(\mu_{1}^{m_{1}}, \mu_{2}^{m_{2}}, \ldots)$ where $\mu_1 > \mu_2 > \cdots $ and $m_{i}$ is the multiplicity of  the summand $\mu_i$.  For instance, $14 + 14 + 10 + 10 + 7 + 7 + 7 + 1 + 1 + 1 + 1$ can be written as $(14^{2},10^{2},7^{3},1^{4})$.  The summands in a partition are called \textit{parts}.The union of two partitions $\lambda$ and $\beta$ is simply the multiset union $\lambda \cup \beta$ where $\lambda$ and $\beta$ are treated as multisets. 
\noindent Partition identities involving various classes of partitions have been studied. We recall a popular one: For a positive integer $t$, the number of partitions of $n$ into parts appearing at most $t - 1$ times is equal to the number of partitions of $n$ into parts not divisible by $t$. This identity is due to  J. W. L. Glaisher \cite{glaisher}. Glaisher's bijection established a one-to-one correspondence between the aforesaid classes of partitions. We shall let $\phi_{t}$ denote Glaisher's map, mapping the set of partitions into parts appearing at most $t - 1$ times  to partitions into parts not divisible by $t$.  For more on this map and partition identities, we refer the reader to \cite{andrews}.  The case of $t = 2$ has posed interesting questions and various studies conducted in many directions. Recently, Andrews and Merca \cite{andrewsm} considered the partition functions:\\ 
$a(n)$: the total number of even parts in all partitions of $n$ into distinct parts;  \\
$c_{e}(n)$(resp. $c_{o}(n)$): the number of partitions in which exactly one part is repeated and this part is even (resp. odd);  \\
$b_{o}(n)$: the number of partitions of $n$ into an odd number of parts in which the set of even parts has only one element;\\
$b_{e}(n)$: the number of partitions of $n$ into an even number of parts in which the set of even parts has only one element;\\
$b(n) = b_{o}(n) - b_{e}(n)$ and $c(n) = c_{o}(n) - c_{e}(n)$.\\
They  obtained the following result.
\begin{theorem}\label{main2}
For all $n\geq 1$,  $$a(n) = c(n) = (-1)^{n}b(n).$$
\end{theorem}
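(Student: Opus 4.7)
The plan is to establish both identities in Theorem \ref{main2} by computing the ordinary generating functions of $a(n)$, $c(n)$, and $(-1)^n b(n)$ and reducing all three to the same closed form. The central algebraic fact I will use is the formal power series identity
$$\sum_{k \geq 1} \frac{q^{2k}}{1+q^{2k}} \;=\; \sum_{\mu \geq 1} (-1)^{\mu+1}\frac{q^{2\mu}}{1-q^{2\mu}},$$
which follows by expanding both sides as $\sum_{k,j\geq 1}(-1)^{j+1}q^{2kj}$ after noting $\frac{q^{2k}}{1+q^{2k}} = \sum_{j \geq 1}(-1)^{j+1}q^{2kj}$ and that the double sum is symmetric in $k$ and $j$.

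First I would derive
$$A(q) := \sum_{n \geq 1}a(n)q^n = \left(\sum_{k \geq 1}\frac{q^{2k}}{1+q^{2k}}\right)\prod_{j \geq 1}(1+q^j),$$
by marking each even part in $\prod_j(1+q^j)$ with a weight, differentiating, and specializing. Next, for $c(n)=c_o(n)-c_e(n)$, I would isolate the unique repeated part $\mu$ (with multiplicity $\geq 2$) from the distinct remaining parts, giving
$$C(q) = \sum_{\mu \geq 1}(-1)^{\mu+1}\frac{q^{2\mu}}{1-q^\mu}\prod_{j \neq \mu}(1+q^j) = \sum_{\mu \geq 1}(-1)^{\mu+1}\frac{q^{2\mu}}{1-q^{2\mu}}\prod_{j \geq 1}(1+q^j),$$
where I have used $(1-q^\mu)(1+q^\mu)=1-q^{2\mu}$ to absorb the missing factor. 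The key identity above then yields $C(q)=A(q)$, hence $c(n)=a(n)$.

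For the second equality, I would use a bivariate generating function with $z$ tracking the number of parts: a partition counted by $b_o+b_e$ has one specific even part $2k$ (multiplicity $\geq 1$) and any number of odd parts, so
$$\sum_{\text{parts}} z^{\#\text{parts}} q^{|\lambda|} = \sum_{k \geq 1}\frac{zq^{2k}}{1-zq^{2k}}\prod_{j \geq 1}\frac{1}{1-zq^{2j-1}}.$$
Setting $z=-1$ extracts $b_e(n)-b_o(n)=-b(n)$, and then substituting $q \mapsto -q$ gives
$$\sum_{n}(-1)^n b(n)q^n = \sum_{k \geq 1}\frac{q^{2k}}{1+q^{2k}}\prod_{j \geq 1}\frac{1}{1-q^{2j-1}}.$$
Finally I invoke Euler's identity $\prod_{j}(1-q^{2j-1})^{-1}=\prod_{j}(1+q^j)$ to match this with $A(q)$, proving $(-1)^n b(n)=a(n)$.

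I expect the main obstacle to be the bookkeeping of signs: the $(-1)^{\mu+1}$ coming from the parity of the repeated part in $c(n)$, the $z=-1$ specialization encoding $(-1)^{\#\text{parts}}$ in $b(n)$, and the extra $q \mapsto -q$ producing $(-1)^n$, must all be kept straight. Verifying the pivotal identity $\sum_k q^{2k}/(1+q^{2k})=\sum_\mu (-1)^{\mu+1}q^{2\mu}/(1-q^{2\mu})$ as equality of divisor sums with signs is routine but needs care with the interchange of summation.
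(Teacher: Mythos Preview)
Your argument is correct. For the first equality $a(n)=c(n)$, your approach coincides with the paper's: Theorem~\ref{main2} is quoted from Andrews--Merca and not proved directly here, but the paper's Theorem~\ref{main3} generalizes and proves $a(n)=c(n)$ by exactly the same mechanism (isolate the repeated part, absorb the missing $(1+q^\mu)$ factor, then perform the Lambert-series swap $\sum_\mu(-1)^{\mu+1}q^{2\mu}/(1-q^{2\mu})=\sum_k q^{2k}/(1+q^{2k})$ via interchange of summation). Setting $p=2$, $r=0$ in that proof recovers precisely your computation of $A(q)$ and $C(q)$.

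For the second equality $a(n)=(-1)^n b(n)$, the paper does not supply its own proof at all---it only cites the result and then extracts the mod~$2$ consequence~\eqref{modu}. Your route (bivariate generating function with $z$ tracking the number of parts, specialize $z=-1$, substitute $q\mapsto -q$, and finish with Euler's identity $(q;q^2)_\infty^{-1}=(-q;q)_\infty$) is a clean and self-contained proof that the paper simply does not attempt. So on this half there is nothing to compare against; your argument stands on its own and fills a gap the paper leaves to the cited reference.
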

Note that, if $b^{\prime}(n)$ denotes the number of partitions of $n$ in which the set of even parts is singleton, then $b^{\prime}(n) = b_{o}(n) + b_{e}(n)$. Hence, one of the implications of the theorem is that 
\begin{equation}\label{modu}
2\mid a(n) - b^{\prime}(n).
\end{equation}
In this paper, we generalize  the identitity $a(n) = c(n)$ of Theorem \ref{main2} and also generalize \eqref{modu}. We then consider some variations. As a consequence, some connections with the work of Fu and Tang in \cite{fu} are highlighted and bijective proofs are provided in such cases.\\ We recall the $q$-series notation:\\
For $a, q\in\mathbb{C}$, we have $ (a;q)_{n} = \prod\limits_{i = 0}^{n - 1}(1 - aq^{i})$ for $n\geq 1$ and  $(a;q)_{0} = 1$, and also for $|q| < 1$,
$$(a;q)_{\infty} = \prod\limits_{i = 0}^{\infty}(1 - aq^{i}).$$
Below are some useful $q$-identities:
\begin{equation}\label{pent}
(q;q)_{\infty} = \sum_{j = -\infty}^{\infty}(-1)^{j}q^{\frac{j(3j - 1)}{2}}= 1 + \sum_{j = 1}^{\infty}(-1)^{j}q^{\frac{j(3j\pm 1)}{2}},
\end{equation}
\begin{equation}\label{three}
(q;q)_{\infty}^{3} = \sum_{j = 0}^{\infty}(-1)^{j}(2j + 1)q^{\frac{j(j + 1)}{2}}.
\end{equation}
\section{Generalizing the identity $a(n) = c(n)$ of Theorem \ref{main2} }
We first observe the following.
\begin{theorem}\label{main3}
Suppose that $p$ and $r$ are non-negative integers such that $p \geq r + 2$.  Denote by $a_{r}(n,p)$ the total number of parts congruent to $-r \pmod{p}$ in partitions of $n$ into distinct parts. Let $g_{r}(n,p)$ denote the number of partitions in which exactly one part is repeated and the multiplicity of this repeated part is at least $p - r$ and congruent to $-r, -r + 1 \pmod{p}$. Let $g_{r,o}(n,p)$ (resp. $g_{r,e}(n,p)$) denote the number of $g_{r}(n,p)$-partitions in which the repeated part is odd (resp. even) and let $g_{r}(n,p) = g_{r,o}(n,p) - g_{r,e}(n,p)$. Then  
$$a_{r}(n,p) = g_{r}(n,p).$$
\end{theorem}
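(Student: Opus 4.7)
The plan is to compare the generating functions of $a_r(n,p)$ and $g_r(n,p)$ directly; the hypothesis $p\geq r+2$ enters only in guaranteeing that the multiplicities allowed in the definition of $g_r(n,p)$ are genuinely $\geq 2$, so no edge case arises.

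First, for $a_r(n,p)$ I would mark parts $\equiv -r\pmod p$ in partitions into distinct parts by a variable $z$, giving
\[
\prod_{k\not\equiv -r\,(p)}(1+q^k)\prod_{k\equiv -r\,(p)}(1+zq^k).
\]
Differentiating with respect to $z$ and setting $z=1$, the coefficient of $q^n$ becomes the total number of parts $\equiv -r\pmod p$ across all partitions of $n$ into distinct parts, so
\[
\sum_{n\geq 0} a_r(n,p)\,q^n \;=\; (-q;q)_\infty \sum_{\substack{k\geq 1\\k\equiv -r\,(p)}}\frac{q^k}{1+q^k}.
\]

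Second, for $g_r(n,p)$ I would single out the repeated part $m$ from the rest of the partition. If $m$ is repeated with multiplicity $s$, where $s\geq p-r$ and $s\equiv -r$ or $-r+1 \pmod p$, and all remaining parts are distinct and distinct from $m$, then, accounting for the sign $(-1)^{m+1}$ according as $m$ is odd or even,
\[
\sum_{n\geq 0} g_r(n,p)\,q^n \;=\; \sum_{m\geq 1}(-1)^{m+1}\!\left(\sum_{s}q^{ms}\right)\!\cdot\frac{(-q;q)_\infty}{1+q^m}.
\]
The allowable $s$ split as $s=jp-r$ and $s=jp-r+1$ for $j\geq 1$, so the inner sum factors as
\[
\sum_{s}q^{ms} \;=\; (1+q^m)\,\frac{q^{m(p-r)}}{1-q^{mp}},
\]
and the factor $(1+q^m)$ cancels to leave
\[
\sum_{n\geq 0} g_r(n,p)\,q^n \;=\; (-q;q)_\infty \sum_{m\geq 1}(-1)^{m+1}\frac{q^{m(p-r)}}{1-q^{mp}}.
\]

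Third, to identify the two right-hand sides, I would apply the expansion $\frac{x}{1+x}=\sum_{m\geq 1}(-1)^{m+1}x^m$ with $x=q^{jp-r}$ to the $a_r$ generating function and swap the two summations, obtaining
\[
\sum_{\substack{k\geq 1\\k\equiv -r\,(p)}}\frac{q^k}{1+q^k} \;=\; \sum_{m\geq 1}(-1)^{m+1}q^{-mr}\sum_{j\geq 1}q^{mjp} \;=\; \sum_{m\geq 1}(-1)^{m+1}\frac{q^{m(p-r)}}{1-q^{mp}},
\]
which matches the formula derived for $g_r(n,p)$; comparing coefficients of $q^n$ completes the proof.

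The main obstacle is bookkeeping in the second step: writing down the right generating function for the class $g_r(n,p)$, in particular translating the combined conditions (exactly one repeated part, multiplicity both bounded below by $p-r$ and lying in two residue classes modulo $p$) into the product $(1+q^m)\,q^{m(p-r)}/(1-q^{mp})$ so that the $(1+q^m)$ cancels cleanly. Once this factorization is correct, everything else reduces to the geometric series expansion.
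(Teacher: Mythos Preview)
Your proof is correct and follows essentially the same route as the paper: both compute the $a_r$ generating function by logarithmic differentiation to get $(-q;q)_\infty\sum_{j\geq 1}\frac{q^{pj-r}}{1+q^{pj-r}}$, and both handle $g_r$ by isolating the repeated part, summing its allowed multiplicities to $(1+q^m)\dfrac{q^{m(p-r)}}{1-q^{mp}}$, and cancelling the $(1+q^m)$ against the missing factor in $\prod_{j\neq m}(1+q^j)$. The only cosmetic differences are that the paper introduces a formal variable $z$ for the repeated part and sets $z=-1$ rather than inserting $(-1)^{m+1}$ directly, and that the paper swaps the double sum on the $g_r$ side whereas you swap it on the $a_r$ side.
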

\noindent It is important to realize that setting $p = 2$ and  $r = 0$ yields the first equality in Andrews and Merca's theorem, Theorem \ref{main2}.
\begin{proof}
\noindent Let $F(z,q)$ be the multivariable generating function for $g_{r}(n,p)$ in which $z$ tracks the repeated part. Then
\begin{align*}
F(z,q)  & =\sum\limits_{n = 1}^{\infty}z^{n}(q^{(p-r)n} + q^{(p-r + p)n} + q^{(p-r + 2p)n} + \cdots + q^{(p-r + 1)n} +  q^{(p - r + 1 +p)n} \\\\
          & \qquad \qquad \qquad + q^{(p-r + 1 + 2p)n} + \cdots )\prod\limits_{j \neq n, j = 1}^{\infty}(1 + q^{j}) \end{align*}
\begin{align*}
          & =\sum\limits_{n = 1}^{\infty}z^{n}(q^{(p-r)n}( 1  + q^{ pn} + q^{2pn} +  q^{3pn} + \cdots )+ q^{(p-r + 1)} (1 + q^{pn} \\
          & \qquad \qquad \qquad + q^{ 2pn} + q^{ 3pn} + \cdots ))\prod\limits_{j \neq n, j = 1}^{\infty}(1 + q^{j}) \\\\
          & = \sum\limits_{n = 1}^{\infty}z^{n}q^{(p-r)n}(1 + q^{n})( 1  + q^{ pn} + q^{2pn} +  q^{3pn} + \cdots )\prod\limits_{j \neq n, j = 1}^{\infty}(1 + q^{j})\\\\
          & = \sum\limits_{n = 1}^{\infty}\frac{z^{n}q^{(p-r)n}(1 + q^{n})}{1 - q^{pn}} \prod\limits_{j \neq n, j = 1}^{\infty}(1 + q^{j})\\\\
        & = \sum\limits_{n = 1}^{\infty}\frac{z^{n}q^{(p-r)n}}{1 - q^{pn}} \prod\limits_{j = 1}^{\infty}(1 + q^{j})\\\\
        & = (-q;q)_{\infty}\sum\limits_{n = 1}^{\infty}\frac{z^{n}q^{(p-r)n}}{1 - q^{pn}}.
\end{align*} 
Using the fact that $\sum\limits_{n = 0}^{\infty}(g_{r,o}(n,p) - g_{r,e}(n,p))q^{n} = -F(-1,q),$ we have
\begin{align*}
\sum\limits_{n = 0}^{\infty}g_{r}(n,p)q^{n} & = -F(-1,q) \\
                                          & = -(-q;q)_{\infty}\sum\limits_{n = 1}^{\infty}\frac{(-1)^{n}q^{(p-r)n}}{1 - q^{pn}} \\
                                          & = -(-q;q)_{\infty}\sum\limits_{n = 1}^{\infty}(-1)^{n}q^{-rn}\frac{q^{pn}}{1 - q^{pn}} \\
                                          & = -(-q;q)_{\infty}\sum\limits_{n = 1}^{\infty}(-1)^{n}q^{-rn} \sum\limits_{m = 1}^{\infty} q^{pnm} \end{align*}
\begin{align*}
                                          & = -(-q;q)_{\infty} \sum\limits_{m = 1}^{\infty} \sum\limits_{n = 1}^{\infty}(-1)^{n}q^{-rn}q^{pnm} \\
                                          & = -(-q;q)_{\infty} \sum\limits_{m = 1}^{\infty} \sum\limits_{n = 1}^{\infty}(-q^{-r + pm})^{n}\\
                                          & = -(-q;q)_{\infty} \sum\limits_{m = 1}^{\infty} \frac{-q^{-r + pm}}{1 + q^{pm - r}}\\
                                          & = (-q;q)_{\infty} \sum\limits_{m = 1}^{\infty} \frac{q^{-r + pm}}{1 + q^{pm - r}}.
\end{align*}
On the other hand, 
\begin{align*} 
\sum\limits_{n = 0}^{\infty}a_{r}(n,p)q^{n} & = \frac{\partial}{ \partial z}\Big\vert_{z = 1} \prod\limits_{n = 1}^{\infty}(1 + zq^{pn - r})\prod\limits_{i = 1, i \neq r}^{p}\prod\limits_{n = 1}^{\infty}(1 + q^{pn - i}) \\\\
                                          & = \prod\limits_{i = 1, i \neq r}^{p}\prod\limits_{n = 1}^{\infty}(1 + q^{pn - i})\prod\limits_{n = 1}^{\infty}(1 + zq^{pn - r})\Big\vert_{z = 1}\frac{\partial}{ \partial z}\Big\vert_{z = 1}\sum\limits_{n = 1}^{\infty}\log(1 + zq^{pn - r}) \\\\
                                          & = \prod\limits_{n = 1}^{\infty} (1 + q^{n})\sum\limits_{n = 1}^{\infty}\frac{q^{pn - r}}{1 + q^{pn - r}} \\
                                          & = (-q;q)_{\infty} \sum\limits_{n = 1}^{\infty} \frac{q^{-r + pn}}{1 + q^{pn - r}}.
\end{align*}
\noindent Thus $$ \sum\limits_{n = 0}^{\infty}a_{r}(n,p)q^{n} = \sum\limits_{n = 0}^{\infty}g_{r}(n,p)q^{n}$$ and the theorem follows. 
\end{proof}
\noindent The above theorem provides an a new combinatorial interpretation of the total number of parts congruent to $-r$ modulo $p$  in partitions  of $n$ into distinct parts. \\\\
\noindent Recall the function $O_{p,j}(n)$ in \cite{fu}. For $j = 1$, we shall denote this function by $o_{p}(n)$ which counts partitions of $n$ in which the set of parts congruent to 0 modulo $p$ is singleton. Define  $a(n,p)$ to be the total number of parts congruent to 0 modulo $p$ in all partitions of $n$ into parts appearing at most $p - 1$ times. Clearly $a(n,2) = a(n) = a_{0}(n,2)$  and $o_{2}(n) = b^{\prime}(n)$. We have the following property which extends \eqref{modu}.
\begin{theorem}\label{ee}
For all $n\geq 0$,
$$p\mid a(n,p) - o_{p}(n).$$
\end{theorem}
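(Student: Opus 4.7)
My plan is to compute the generating functions for both $a(n,p)$ and $o_p(n)$ and show that their difference is visibly divisible by $p$ as a formal power series in $\mathbb{Z}[[q]]$.

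First, I would write down
\[
\sum_{n\geq 0} o_p(n) q^n \;=\; \prod_{m\geq 1}\frac{1-q^{pm}}{1-q^m}\cdot \sum_{k\geq 1}\frac{q^{pk}}{1-q^{pk}},
\]
by decomposing each partition counted by $o_p(n)$ into its sub-partition into parts not divisible by $p$ (handled by Glaisher's identity) and its unique distinct part divisible by $p$, which appears with some positive multiplicity.

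Next, mirroring the logarithmic-derivative technique already used in the proof of Theorem \ref{main3}, I would attach a marker $z$ to parts divisible by $p$ in partitions with multiplicity at most $p-1$, forming
\[
G(z,q)=\prod_{p\nmid m}\frac{1}{1-q^m}\cdot\prod_{k\geq 1}\bigl(1+zq^{pk}+\cdots+z^{p-1}q^{(p-1)pk}\bigr),
\]
so that $\sum_n a(n,p)q^n = (\partial_z G)(1,q)$. The log-derivative at $z=1$ gives
\[
\sum_{n\geq 0} a(n,p) q^n = \prod_{m\geq 1}\frac{1-q^{pm}}{1-q^m}\cdot \sum_{k\geq 1}\frac{q^{pk}+2q^{2pk}+\cdots+(p-1)q^{(p-1)pk}}{1+q^{pk}+\cdots+q^{(p-1)pk}}.
\]

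The crux will be the elementary rational-function identity
\[
\frac{y+2y^2+\cdots+(p-1)y^{p-1}}{1+y+\cdots+y^{p-1}} - \frac{y}{1-y} \;=\; -\,\frac{p\, y^p}{1-y^p},
\]
which I would verify by clearing denominators and invoking $\sum_{j=0}^{p-1} j\, y^j = y\frac{d}{dy}\frac{1-y^p}{1-y}$. Substituting $y=q^{pk}$ and summing over $k$, the two generating functions above differ precisely by
\[
-\,p\,\prod_{m\geq 1}\frac{1-q^{pm}}{1-q^m}\cdot\sum_{k\geq 1}\frac{q^{p^2k}}{1-q^{p^2k}},
\]
whose coefficients manifestly lie in $p\mathbb{Z}$. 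The main obstacle I anticipate is the rational-function identity above; everything else is bookkeeping. A sanity check at $p=2$ collapses the statement to equation \eqref{modu}, since $(-q;q)_\infty = \prod_m (1-q^{2m})/(1-q^m)$, $a(n,2)=a(n)$, and $o_2(n)=b'(n)$.
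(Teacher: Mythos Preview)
Your proposal is correct and follows essentially the same route as the paper: both write the two generating functions with the common prefactor $(q^{p};q^{p})_{\infty}/(q;q)_{\infty}$ and then show that the inner sums differ by a series with coefficients in $p\mathbb{Z}$. The paper quotes the $a(n,p)$ generating function from Herden--Sepanski et al.\ and simplifies via the closed form \eqref{form} for $\sum_{k=1}^{p-1}kx^{k}$, whereas you derive the generating function by a log-derivative and package the same computation as the single identity
\[
\frac{y+2y^{2}+\cdots+(p-1)y^{p-1}}{1+y+\cdots+y^{p-1}}-\frac{y}{1-y}=-\frac{p\,y^{p}}{1-y^{p}},
\]
which is exactly what the paper's step-by-step simplification produces. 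The differences are cosmetic.
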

\begin{proof}
The generating function for $o_{p}(n)$ is given by
$$ \sum_{n = 0}^{\infty}o_{p}(n)q^{n} = \frac{(q^p;q^p)_{\infty}}{(q;q)_{\infty}}\sum\limits_{n = 1}^{\infty}\frac{q^{pn}}{1 - q^{pn}}$$
and the generating function for $a(n,p)$ is given by Herden, Sepanski, et al.  \cite{herden}
 $$ \frac{(q^{p};q^{p})_{\infty}}{(q;q)_{\infty}}\sum\limits_{n = 1}^{\infty}\frac{ q^{pn} + 2q^{2pn} + 3q^{3pn} + \ldots (p - 1)q^{(p - 1)pn} } {1 + q^{pn} + q^{2pn} + \ldots + q^{(p - 1)pn}}.$$
Recall the identity:
\begin{equation}\label{form}
\sum_{k = 1}^{n}kx^{k} = \frac{x(1 - x^{n + 1}) - (n + 1)(1 - x)x^{n + 1}}{(1 - x)^2}.
\end{equation}
Manipulating  the generating function, we have
\begin{align*}
\sum_{n = 0}^{\infty}a(n,p)q^{n} &  = \frac{(q^{p};q^{p})_{\infty}}{(q;q)_{\infty}}\sum\limits_{n = 1}^{\infty}\frac{  \sum\limits_{j = 1}^{p - 1}jq^{pnj}}{\frac{1 - q^{p^{2}n}}{1 - q^{pn}} }  \\
                                                   & =  \frac{(q^{p};q^{p})_{\infty}}{(q;q)_{\infty}}\sum\limits_{n = 1}^{\infty}\frac{  \frac{q^{pn}(1 - q^{p^{2}n}) - p(1 - q^{pn})q^{p^{2}n}}{(1 - q^{pn})^2}}{\frac{1 - q^{p^{2}n}}{1 - q^{pn}} }\,\,\,\,(\text{by \eqref{form} with}\,\,\, n = p - 1 , x = q^{pn}) \\
& = \frac{(q^{p};q^{p})_{\infty}}{(q;q)_{\infty}}\sum\limits_{n = 1}^{\infty} \frac{1 - q^{pn}}{1 - q^{p^{2}n}}\,\,\frac{q^{pn}(1 - q^{p^{2}n}) - p(1 - q^{pn})q^{p^{2}n}}{(1 - q^{pn})^2} \\
& \equiv \frac{(q^{p};q^{p})_{\infty}}{(q;q)_{\infty}}\sum\limits_{n = 1}^{\infty} \frac{1 - q^{pn}}{1 - q^{p^{2}n}}\,\,\frac{q^{pn}(1 - q^{p^{2}n}) }{(1 - q^{pn})^2} \pmod{p} \\
&  = \frac{(q^{p};q^{p})_{\infty}}{(q;q)_{\infty}}\sum\limits_{n = 1}^{\infty}\frac{q^{pn}}{1 - q^{pn}} \\
& = \sum\limits_{n = 0}^{\infty}o_{p}(n)q^{n}.
\end{align*}
\end{proof}
Denoting by $s(n)$ the number of (unrestricted)partitions of $n$ and letting  $f(q) = \sum\limits_{n = 1}^{\infty} \frac{q^{n}}{1 - q^{n}}  - p \sum\limits_{n = 1}^{\infty}\frac{q^{pn}}{1 - q^{pn}}$. Then
\begin{align*}
 \sum_{n = 0}^{\infty}a(n,p)q^{n}  & =    (q^{p};q^{p})_{\infty}\left( \sum\limits_{n = 1}^{\infty} \frac{q^{pn}}{1 - q^{pn}}  - p \sum\limits_{n = 1}^{\infty}\frac{q^{p^{2}n}}{1 - q^{p^{2}n}}\right)\frac{1}{(q;q)_{\infty}}    \\
                                                     &  = (q^{p};q^{p})_{\infty} f(q^{p})\sum_{n = 0}^{\infty}s(n)q^{n}  \\
                                                   & = \sum_{j = 0}^{p - 1}q^{j}(q^{p};q^{p})_{\infty} f(q^{p})\sum_{n = 0}^{\infty}s(pn + j)q^{pn}
\end{align*}
so that
$$\sum\limits_{j = 0}^{p - 1}q^{j}\sum_{n = 0}^{\infty}a(pn + j,p)q^{pn}  = \sum_{j = 0}^{p - 1}q^{j}(q^{p};q^{p})_{\infty} f(q^{p})\sum_{n = 0}^{\infty}s(pn + j)q^{pn} $$
which implies that, for a fixed  $0\leq j\leq p-1$,
$$\sum_{n = 0}^{\infty}a(pn + j,p)q^{pn +j}  = (q^{p};q^{p})_{\infty} f(q^{p})\sum_{n = 0}^{\infty}s(pn + j)q^{pn + j}$$
so that
$$ \sum_{n = 0}^{\infty}a(pn + j,p)q^{pn}  = (q^{p};q^{p})_{\infty} f(q^{p})\sum_{n = 0}^{\infty}s(pn + j)q^{pn}.$$
Replacing $q$ with $q^{1/p}$ yields
$$ \sum_{n = 0}^{\infty}a(pn + j,p)q^{n}  = (q;q)_{\infty} f(q)\sum_{n = 0}^{\infty}s(pn + j)q^{n}.$$
Thus, for  $n\geq 0$, 
$$s(pn +j) \equiv 0 \pmod{p}\,\,\Rightarrow\,\, a(pn +j, p) \equiv 0 \pmod{p}.$$
 In particular, applying the Ramanujan's partition congruences: $s(5n + 4) \equiv 0 \pmod{5}$, $s(7n + 5) \equiv 0 \pmod{7}$ and  $s(11n + 6) \equiv 0 \pmod{11}$, we have the following result.
\begin{theorem}
For all $n\geq 0$,
$$a(5n + 4, 5) \equiv 0 \pmod{5},$$
$$ a(7n + 5, 7) \equiv 0 \pmod{7}$$
and
$$ a(11n + 6,11) \equiv 0 \pmod{11}.$$
\end{theorem}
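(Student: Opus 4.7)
The plan is to invoke the implication already derived in the paragraph immediately preceding the theorem and then specialize to the three classical Ramanujan partition congruences. The structural work has essentially been carried out in the dissection argument above, so what remains is a clean three-line deduction.

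First I would record the key intermediate identity obtained after the $p$-dissection of the generating function of $a(n,p)$: for every residue $0 \leq j \leq p-1$,
$$\sum_{n=0}^{\infty} a(pn+j, p)\, q^{n} \; = \; (q;q)_{\infty}\, f(q)\, \sum_{n=0}^{\infty} s(pn+j)\, q^{n},$$
where $f(q) = \sum_{n\geq 1} \frac{q^{n}}{1-q^{n}} - p \sum_{n\geq 1} \frac{q^{pn}}{1-q^{pn}}$. Since both $(q;q)_{\infty}$ and $f(q)$ have integer $q$-expansions, so does their product, and hence the right-hand side is a $\mathbb{Z}[[q]]$-linear combination of the series $\sum_{n} s(pn+j) q^{n}$.

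Next I would reduce this identity modulo $p$. If $s(pn+j)\equiv 0\pmod{p}$ for every $n \geq 0$, then the series $\sum_{n} s(pn+j) q^{n}$ is identically zero mod $p$, and hence so is $\sum_{n} a(pn+j, p) q^{n}$; comparing coefficients of $q^{n}$ yields $a(pn+j, p) \equiv 0 \pmod{p}$ for every $n \geq 0$.

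Finally I would specialize $(p,j)$ to $(5,4)$, $(7,5)$, and $(11,6)$, and invoke Ramanujan's classical congruences $s(5n+4) \equiv 0 \pmod{5}$, $s(7n+5) \equiv 0 \pmod{7}$, $s(11n+6) \equiv 0 \pmod{11}$ respectively; the three stated congruences follow at once. The only conceptual hurdle lies upstream, namely confirming that the factor $(q^{p};q^{p})_{\infty} f(q^{p})$ used in the dissection genuinely involves only powers of $q^{p}$ so that multiplication by it preserves residues of exponents modulo $p$; but this is immediate by construction, so at the level of the theorem itself there is no real obstacle beyond invoking the three Ramanujan congruences.
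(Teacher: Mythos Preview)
Your argument is correct and follows exactly the paper's own route: you invoke the dissection identity $\sum_{n}a(pn+j,p)q^{n}=(q;q)_{\infty}f(q)\sum_{n}s(pn+j)q^{n}$ and then specialize $(p,j)$ to the three Ramanujan cases. The only addition is your explicit remark that $(q;q)_{\infty}f(q)\in\mathbb{Z}[[q]]$, which the paper leaves implicit.
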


\vspace{5mm}

\noindent Let $o_{p,e}(n)$ (resp. $o_{p,o}(n)$) be the number of $o_{p}(n)$-partitions in which the number of parts congruent to 0 modulo $p$  is even (resp. odd) and 
\vspace{4mm}

\begin{equation*}
H(z,q) = \frac{(q^{p};q^{p})_{\infty}}{(q;q)_{\infty}}\sum_{n = 1}^{\infty} \frac{zq^{pn }}{1 - zq^{pn }}.
\end{equation*}
Then
\begin{align}
\sum_{n = 0}^{\infty}o_{p,o}(n)q^{n} & = \frac{1}{2}\left(H(1,q) - H(-1,q)\right) \nonumber\\
                                                           & = \frac{(q^{p};q^{p})_{\infty}}{2(q;q)_{\infty}}\left(\sum_{n = 1}^{\infty} \frac{q^{pn }}{1 - q^{pn }}  + \sum_{n = 1}^{\infty} \frac{q^{pn }}{1 + q^{pn }}  \right)\nonumber \\
                                                           & = \frac{(q^{p};q^{p})_{\infty}}{(q;q)_{\infty}}\sum_{n = 1}^{\infty} \frac{q^{pn }}{1 - q^{2pn }}\nonumber
\end{align}
\begin{align}\label{later1}
                                                           & = \frac{(q^{p};q^{p})_{\infty}}{(q;q)_{\infty}}\sum_{n = 1}^{\infty} q^{pn } \sum_{m = 0}^{\infty}q^{2pnm}\nonumber\\
                                                          & =  \frac{(q^{p};q^{p})_{\infty}}{(q;q)_{\infty}}\sum_{m = 0}^{\infty}\sum_{n = 1}^{\infty}q^{(p + 2pm)n}\nonumber\\
                                                          & = \frac{(q^{p};q^{p})_{\infty}}{(q;q)_{\infty}}\sum_{m = 0}^{\infty}\frac{q^{p + 2pm}}{1 - q^{2pm + p}}
\end{align}
and similarly,
\begin{align}\label{later2}
\sum_{n = 0}^{\infty}o_{p,e}(n)q^{n} & = \frac{1}{2}\left(H(1,q) +  H(-1,q)\right) \nonumber\\
                                                               & = \frac{(q^{p};q^{p})_{\infty}}{(q;q)_{\infty}}\sum_{n = 1}^{\infty} \frac{q^{2pn }}{1 - q^{2pn }}.
\end{align}
Thus, if for $ i = 0, p$, $h_{i}(n,p)$ is the number of partitions in which parts are congruent  to $1,2,3,\ldots, p - 1$ modulo $p$  or congruent to $i$ modulo $2p$ and the set of parts $\equiv i \pmod{2p}$ is singleton, then
\begin{equation*}
o_{p,o}(n)= h_{p}(n,p) \,\,\,\,\,\text{and}\,\,\,\,o_{p,e}(n) = h_{0}(n,p).
\end{equation*}
\section{Further variations}
Consider the following partition functions:\\
$d_{e}(n)$: the number of partitions of $n$ in which exactly one even part is repeated and odd parts are unrestricted. \\
$d_{o}(n)$: the number of partitions of $n$ in which exactly one odd part is repeated and even parts are unrestricted. \\
For $i = 0, 2$, $f_{i}(n)$: the number of partitions of $n$ in which the set of parts congruent to $i \pmod{4}$ is singleton. 
Then we have the following theorem:
\begin{theorem}\label{var0}
For $n \geq 1$,  $$d_{e}(n) = f_{0}(n) \,\,\,\,\,\text{and}\,\,\,\,\,\,d_{o}(n) = f_{2}(n).$$
\end{theorem}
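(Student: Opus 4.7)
The plan is a generating-function argument that mirrors the structure of the proof of Theorem 2.1, computing the series for each of the four partition functions and showing that the two claimed equalities reduce to the same closed form.

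For $d_e(n)$, I would introduce a catalytic variable $z$ tracking the repeated even part and write
\begin{align*}
D_e(z,q)=\sum_{k\ge 1}z^{k}\!\left(\sum_{m\ge 2}q^{2km}\right)\prod_{j\ne k}(1+q^{2j})\cdot\prod_{j\ge 1}\frac{1}{1-q^{2j-1}}.
\end{align*}
The inner geometric series contributes $q^{4k}/(1-q^{2k})$, and the usual trick of multiplying and dividing by $(1+q^{2k})$ folds the missing factor back into the product, giving
$$\sum_{n\ge 0}d_e(n)q^n=\frac{(-q^{2};q^{2})_\infty}{(q;q^{2})_\infty}\sum_{k\ge 1}\frac{q^{4k}}{1-q^{4k}}.$$
Using $(-q^{2};q^{2})_\infty=(q^{4};q^{4})_\infty/(q^{2};q^{2})_\infty$ and $(q^{2};q^{2})_\infty(q;q^{2})_\infty=(q;q)_\infty$, this collapses to $\dfrac{(q^{4};q^{4})_\infty}{(q;q)_\infty}\sum_{k\ge 1}\dfrac{q^{4k}}{1-q^{4k}}$.

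For $f_0(n)$, the ``singleton'' condition on parts $\equiv 0\pmod 4$ is the exact analogue of what produces the series expression for $o_p(n)$ in Section 2, so the generating function is immediately
$$\sum_{n\ge 0}f_0(n)q^n=\frac{(q^{4};q^{4})_\infty}{(q;q)_\infty}\sum_{k\ge 1}\frac{q^{4k}}{1-q^{4k}},$$
matching the expression for $d_e$ and proving the first identity.

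The second identity is handled identically with the roles of parities swapped: for $d_o(n)$ the repeated odd part $2k-1$ contributes $q^{2(2k-1)}/(1-q^{2k-1})$, the other odd parts give $(-q;q^{2})_\infty/(1+q^{2k-1})$ after restoring the missing factor, and the unrestricted even parts give $1/(q^{2};q^{2})_\infty$; after using $(-q;q^{2})_\infty(q;q^{2})_\infty=(q^{2};q^{4})_\infty$ and $(q;q^2)_\infty(q^2;q^2)_\infty=(q;q)_\infty$, both $d_o$ and $f_2$ reduce to
$$\frac{(q^{2};q^{4})_\infty}{(q;q)_\infty}\sum_{k\ge 1}\frac{q^{4k-2}}{1-q^{4k-2}}.$$
There is no real obstacle: the only step that needs a moment's care is the standard bookkeeping trick that reinserts the missing $j=k$ factor into the infinite product so that the sum factors cleanly; once that is done, the remaining manipulations are direct applications of the $q$-Pochhammer identities listed above.
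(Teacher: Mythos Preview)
Your proposal is correct and follows essentially the same route as the paper's proof: both compute the generating function for $d_e(n)$ (resp.\ $d_o(n)$) by summing over the repeated part, reinsert the missing factor into the product to obtain $\dfrac{(-q^{2};q^{2})_\infty}{(q;q^{2})_\infty}\sum_{k\ge 1}\dfrac{q^{4k}}{1-q^{4k}}$ (resp.\ the odd analogue), and then use the same $q$-Pochhammer identities to match the obvious generating functions for $f_0(n)$ and $f_2(n)$. The catalytic variable $z$ you introduce is harmless but unnecessary here, since no parity split on the repeated part is required; otherwise the arguments coincide step for step.
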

\begin{proof}
Note that
\begin{align*}
\sum\limits_{n = 0}^{\infty}d_{e}(n)q^{n} & = \frac{1}{(q;q^{2})_{\infty}}\sum\limits_{n = 1}^{\infty} \frac{q^{2n + 2n}}{1 - q^{2n}}\prod\limits_{j \neq n, j = 1}^{\infty}(1 + q^{2j}) \\
                                          & = \frac{1}{(q;q^{2})_{\infty}}\sum\limits_{n = 1}^{\infty} \frac{q^{4n}(-q^{2};q^{2})_{\infty}}{(1 - q^{2n})(1 + q^{2n})}\\
                                          & = \frac{(-q^{2};q^{2})_{\infty}}{(q;q^{2})_{\infty}}\sum\limits_{n = 1}^{\infty} \frac{q^{4n}}{1 - q^{4n}}  \end{align*}
\begin{align*}
                                          & = \frac{(-q^{2};q^{2})_{\infty}(q^{2};q^{2})_{\infty}}{(q;q^{2})_{\infty}(q^{2};q^{2})_{\infty}}\sum\limits_{n = 1}^{\infty} \frac{q^{4n}}{1 - q^{4n}} \\
                                          & = \frac{(q^{4};q^{4})_{\infty}}{(q;q)_{\infty}}\sum\limits_{n = 1}^{\infty} \frac{q^{4n}}{1 - q^{4n}} \\
                                          & = \frac{1}{(q;q^{4})_{\infty} (q^{2};q^{4})_{\infty} (q^{3};q^{4})_{\infty}}\sum\limits_{n = 1}^{\infty} (q^{4n} + q^{4n + 4n} + q^{4n + 4n + 4n} + \ldots ) \\
                                          & = \sum\limits_{n = 0}^{\infty}f_{0}(n)q^{n}.
\end{align*}
Similarly,
\begin{align*}
\sum\limits_{n = 0}^{\infty}d_{o}(n)q^{n} & = \frac{1}{(q^{2};q^{2})_{\infty}}\sum\limits_{n = 1}^{\infty} \frac{q^{2n - 1 + 2n - 1}}{1 - q^{2n - 1}}\prod\limits_{j \neq n, j = 1}^{\infty}(1 + q^{2j - 1}) \\\\
                                          & = \frac{1}{(q^{2};q^{2})_{\infty}}\sum\limits_{n = 1}^{\infty} \frac{q^{4n - 2}(-q;q^{2})_{\infty}}{(1 - q^{2n - 1})(1 + q^{2n - 1})} \\\\
                                          & = \frac{(-q ;q^{2})_{\infty}}{(q^{2};q^{2})_{\infty}}\sum\limits_{n = 1}^{\infty} \frac{q^{4n - 2}}{1 - q^{4n - 2}} \\
                                          & = \frac{(-q;q^{2})_{\infty}(q;q^{2})_{\infty}}{(q^{2};q^{2})_{\infty}(q;q^{2})_{\infty}}\sum\limits_{n = 1}^{\infty} \frac{q^{4n - 2}}{1 - q^{4n - 2}} \\
                                           & = \frac{(q^{2};q^{4})_{\infty}}{(q;q)_{\infty}}\sum\limits_{n = 1}^{\infty} \frac{q^{4n - 2}}{1 - q^{4n - 2}} \\
                                          & = \frac{1}{(q;q^{4})_{\infty} (q^{3};q^{4})_{\infty} (q^{4};q^{4})_{\infty}}\sum\limits_{n = 1}^{\infty} (q^{4n - 2} + q^{2(4n - 2)} + q^{3(4n - 2)} + \ldots ) \\
                                          & = \sum\limits_{n = 0}^{\infty}f_{2}(n)q^{n}.
\end{align*}
\end{proof}
\noindent We also note the following bijective proof which is uniform for both identities in the theorem. \\\\
\noindent \underline{The bijection } \\\\
\noindent Let $\lambda = \left(\lambda_{1}^{m_{1}}, \lambda_{2}^{m_{2}}, \hdots, \lambda_{l}^{m_{l}}\right)$ be a partition enumerated by $f_{2}(n)/f_{0}(n)$. Then 
$$
\lambda_{i}^{m_{i}} \mapsto \begin{cases}
\left(\dfrac{\lambda_{i}}{2}\right)^{2m_{i}}, \quad \lambda_{i} \equiv 0,2 \pmod{4}; \\\\
\phi_{2}^{-1}(\lambda_{i}^{m_{i}}), \quad \lambda_{i} \equiv 1,3 \pmod{4}.
\end{cases}
$$

%\begin{example}
%For $n = 7$, set $\{(6, 1), (5,2), (4, 2, 1), (3, 2^{2}), (3, 2, 1^{2}), (2^{3}, 1), (2^{2}, 1^{3}), (2, 1^{5})\}$ is enumerated by $f_{2}(7)$. In partition $(2, 1^{5})$ the parts are congruent to $2 \pmod 4$ and $1 \pmod{4}$, respectively. Applying the map we obtain $$ 2 \mapsto 1^{2},$$ $$ 1^{5} \mapsto (4,1). $$ Taking the union gives $$ (2, 1^{5}) \mapsto (4, 1^{3}). $$ Similarly,
%\[ (6, 1) \mapsto (3^{2}, 1) \]
%\[ (5,2)  \mapsto (5, 1^{2}) \]
%\[ (4, 2, 1) \mapsto (2^{2}, 1^{3}) \]
%\[ (3, 2^{2}) \mapsto (3, 1^{4}) \]
%\[ (3, 2, 1^{2}) \mapsto (3, 2, 1^{2}) \]
%\[ (2^{3}, 1) \mapsto 1^{7} \]
%\[ (2^{2}, 1^{3}) \mapsto (2, 1^{5}) \]
%
%\end{example}

\noindent To invert the mapping, let $\mu = \left(\mu_{1}^{s_{1}}, \mu_{2}^{s_{2}}, \hdots, \mu_{l}^{s_{l}} \right)$ be a partition enumerated by $d_{o}(n)/d_{e}(n)$. Then 
\[
\mu_{i}^{s_{i}} \mapsto  (2\mu_{i})^{\lfloor \frac{s_i}{2}\rfloor } \cup \phi_{2}(\mu_{i}^{ s_i - 2\lfloor \frac{s_i}{2}\rfloor }).\]
\vspace{5mm}

\noindent Recall that $$ \sum\limits_{n = 0}^{\infty}d_{e}(n)q^{n} = \frac{(q^{4};q^{4})_{\infty}}{(q;q)_{\infty}}\sum\limits_{n = 1}^{\infty} \frac{q^{4n}}{1 - q^{4n}}$$ so that
\begin{equation}\label{eql}
(q;q)_{\infty}\sum\limits_{n = 0}^{\infty}d_{e}(n)q^{n} = (q^{4};q^{4})_{\infty}\sum\limits_{n = 1}^{\infty} \frac{q^{4n}}{1 - q^{4n}}.
\end{equation}
Using \eqref{pent}, we have 
\begin{align}\label{ine}
(q;q)_{\infty}\sum\limits_{n = 0}^{\infty}d_{e}(n)q^{n}  & =  \left( 1 + \sum\limits_{n = 1}^{\infty}(-1)^{n}q^{\frac{n(3n \pm 1)}{2}} \right)\sum\limits_{n = 0}^{\infty}d_{e}(n)q^{n} \nonumber \\
& = \sum_{n = 0}^{\infty}\sum_{k = 0}^{n}c_{k}d_{e}(n-k)q^{n}
\end{align}
where  
\[ c_{k} = 
\begin{cases}
 (-1)^{j}, & \text{ if $k =\frac{ j(3j \pm 1)}{2}, j \in \mathbb{Z}_{\geq 0}$}; \\
 0, & \text{otherwise}.
\end{cases}
\]
Note that the right-hand side of \eqref{ine} is equal to   
 $$\sum\limits_{n = 0}^{\infty} \left[d_{e}(n) + \sum\limits_{j = 1}^{\lfloor\frac{1 + \sqrt{24n + 1}}{6} \rfloor} (-1)^{j}\left( d_{e}\left(n - \frac{j(3j + 1)}{2}\right) + d_{e}\left(n - \frac{j(3j - 1 )}{2}\right)\right)\right]q^{n}.$$
\noindent Invoking \eqref{pent} and the identity
$$ \sum\limits_{n = 1}^{\infty}\frac{q^{n}}{1 - q^{n}}  =  \sum\limits_{n = 1}^{\infty}\sigma_{0}(n)q^{n}$$ where $\sigma_{0}(n)$ is the number of  positive divisors of $n$,  \eqref{eql} becomes
\begin{align*}
 	& \sum\limits_{n = -\infty}^{\infty} (-1)^{n}q^{n(3n + 1)/2} \cdot \sum\limits_{n = 0}^{\infty}d_{e}(n)q^{n}\\
 & = \sum\limits_{n = -\infty}^{\infty}(-1)^{n}q^{2n(3n + 1)} \sum\limits_{n = 1}^{\infty} \sigma_{0}(n)q^{4n}.
                                                    \end{align*}
Hence, we have the following.
\begin{corollary}
For all integers  $r \geq 0$, let  $$ A(r) = \{j \in \mathbb{Z}_{> 0}:  2j(3j + 1) \equiv r \pmod{4} \,\,\,\text{and}\,\,\, j \leq  \left\lfloor\frac{\sqrt{6r + 1} - 1}{6}\right\rfloor \}$$ and 
$$ B(r) = \{j \in \mathbb{Z}_{> 0}:  2j(3j - 1) \equiv r \pmod{4} \,\,\,\text{and}\,\,\, j\leq \left\lfloor\frac{ 1 + \sqrt{6r + 1}}{6} \right\rfloor \}.$$
Then for a positive integer $n$, 
\[
d_{e}(n) 
= \begin{cases} 
\sum\limits_{j =1}^{ \lfloor\frac{ 1 + \sqrt{24n + 1}}{6} \rfloor  } (-1)^{j + 1 }\left(d_{e}\left(n - \frac{j(3j + 1 )}{2}\right) + d_{e}\left(n - \frac{j(3j - 1 )}{2}\right)\right)  & \text{if $n \not\equiv 0 \pmod{4}$} \\\\
\sum\limits_{j =1}^{ \lfloor\frac{ 1 + \sqrt{24n + 1}}{6} \rfloor  } (-1)^{j + 1 }\left(d_{e}\left(n - \frac{j(3j + 1 )}{2}\right) + d_{e}\left(n - \frac{j(3j - 1 )}{2}\right)\right)  + \gamma(n) & \text{if $ n \equiv 0 \pmod{4}$}
\end{cases}
\]
where $$ \gamma(n) = \sigma_{0}(n/4)  + \sum_{j\in A(n)}(-1)^{j}\sigma_{0}\left(\frac{n - 2j(3j + 1)}{4}\right)  +  \sum_{j\in B(n)}(-1)^{j} \sigma_{0}\left(\frac{n - 2j(3j - 1)}{4}\right).$$
\end{corollary}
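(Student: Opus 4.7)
The plan is to equate coefficients of $q^n$ on the two sides of the identity established just before the corollary, namely
\begin{equation*}
\sum_{k=-\infty}^{\infty}(-1)^{k}q^{k(3k+1)/2}\cdot\sum_{n=0}^{\infty}d_{e}(n)q^{n} = \sum_{k=-\infty}^{\infty}(-1)^{k}q^{2k(3k+1)}\cdot\sum_{m=1}^{\infty}\sigma_{0}(m)q^{4m}.
\end{equation*}
The coefficient of $q^{n}$ on the left-hand side has already been written out in the discussion preceding the corollary, so the real work is to extract the coefficient of $q^{n}$ from the right-hand side and rearrange.

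For the right-hand side, I would split the pentagonal series by the sign of the summation index to rewrite
\begin{equation*}
\sum_{k=-\infty}^{\infty}(-1)^{k}q^{2k(3k+1)} = 1+\sum_{j=1}^{\infty}(-1)^{j}\bigl(q^{2j(3j+1)}+q^{2j(3j-1)}\bigr).
\end{equation*}
The key arithmetic observation is that $j(3j\pm 1)$ is always even (exactly one of the two factors is), so $2j(3j\pm 1)\equiv 0\pmod{4}$ for every $j\ge 1$. Since the companion series $\sum_{m\ge 1}\sigma_{0}(m)q^{4m}$ also carries only powers of $q$ divisible by $4$, the Cauchy product has vanishing coefficient at $q^{n}$ whenever $n\not\equiv 0\pmod{4}$. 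Transferring the pentagonal sum on the left-hand side to the right then produces, after flipping $(-1)^{j}\to(-1)^{j+1}$, the first case of the recursion.

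When $n\equiv 0\pmod{4}$, the $k=0$ term of the pentagonal expansion contributes $\sigma_{0}(n/4)$, while each $j\ge 1$ contributes $(-1)^{j}\sigma_{0}((n-2j(3j+1))/4)$ or $(-1)^{j}\sigma_{0}((n-2j(3j-1))/4)$, provided the argument of $\sigma_{0}$ is a positive integer. Divisibility by $4$ is automatic by the observation above, while positivity reduces to $2j(3j\pm 1)\le n$, which solves to $j\le\lfloor(\sqrt{6n+1}-1)/6\rfloor$. These are precisely the conditions defining $A(n)$ and $B(n)$, and collecting all contributions reproduces $\gamma(n)$; solving for $d_{e}(n)$ then gives the second case. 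I do not expect any substantive obstacle: the argument is pure coefficient bookkeeping, and the only point requiring any care is the parity observation $4\mid 2j(3j\pm 1)$, which both forces the case split on $n\bmod 4$ and reduces the membership test in $A(n)$ and $B(n)$ to a single size bound.
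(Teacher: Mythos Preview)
Your argument is correct and is exactly the paper's approach: the corollary is read off by comparing coefficients of $q^{n}$ in the identity established just before it, using the left-hand expansion already displayed and your pentagonal split of the right-hand side. One small slip: solving $2j(3j-1)\le n$ gives $j\le\lfloor(1+\sqrt{6n+1})/6\rfloor$ (the bound defining $B(n)$), not the $-1$ version you wrote; your parity observation $4\mid 2j(3j\pm1)$ is correct and in fact shows the congruence clause in $A(n)$ and $B(n)$ is redundant when $n\equiv 0\pmod 4$, a point the paper leaves implicit.
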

\noindent In this case, we have a relationship between $d_{e}(n)$ and $\sigma_{0}(n)$.
\begin{example}
Consider $n=8$.
\end{example}
%\noindent The $d_{e}(8)$-partitions are:
%$$...fill in the partitions here.....$$
\noindent Since $8 \equiv 0 \pmod{4}$, we have
\begin{align*}
d_{e}(8) & = d_{e}(8-2) + d_{e}(8-1) - d_{e}(8-7) - d_{e}(8-5) + \gamma(8) \\
 & = d_{e}(6) + d_{e}(7) - d_{e}(1) - d_{e}(3) + \gamma(8) \\
% & = [d_{e}(6-2) + d_{e}(6-1) - d_{e}(6-7)- d_{e}(6-5)] + [d_{e}(7-2) + d_{e}(7-1) - d_{e}(7-7) - d_{e}(7-5)] - [d_{e}(1-2) + d_{e}(1-1)] - [d_{e}(3-2) + d_{e}(3-1)] + \gamma(8) \\
 & = d_{e}(4) + 2d_{e}(5) - d_{e}(1) + d_{e}(6) - 2d_{e}(2) - d_{e}(1) + \gamma(8) \\
% & = [d_{e}(4-2) + d_{e}(4-1) + \gamma(4)] + 2[d_{e}(5-2) + d_{e}(5-1) - d_{e}(5-7) - d_{e}(5-5)] - [d_{e}(1-2) + d_{e}(1-1)] + [d_{e}(6-2) + d_{e}(6-1) - d_{e}(6-7)- d_{e}(6-5)] - 2[d_{e}(2-2) + d_{e}(2-1)] - [d_{e}(1-2) + d_{e}(1-1)] + \gamma(8) \\
 & = d_{e}(2) + 3d_{e}(3) + 3d_{e}(4) + d_{e}(5) - 3d_{e}(1) + \gamma(4) + \gamma(8) \\
% & = [d_{e}(2-2) + d_{e}(2-1)] + 3[d_{e}(3-2) + d_{e}(3-1)] + 3[d_{e}(4-2) + d_{e}(4-1) + \gamma(4)] + [d_{e}(5-2) + d_{e}(5-1) - d_{e}(5-7) - d_{e}(5-5)] - 3[d_{e}(1-2) + d_{e}(1-1)] + \gamma(4) + \gamma(8) \\
 & = 4d_{e}(1) + 6d_{e}(2) + 4d_{e}(3) + d_{e}(4) + 4\gamma(4) + \gamma(8) \\ 
% & = 4[d_{e}(1-2) + d_{e}(1-1)] + 6[d_{e}(2-2) + d_{e}(2-1)] + 4[d_{e}(3-2) + d_{e}(3-1)] + [d_{e}(4-2) + d_{e}(4-1) + \gamma(4)] + 4\gamma(4) + \gamma(8) \\ 
 & = 10d_{e}(1) + 5d_{e}(2) + d_{e}(3) + 5\gamma(4) + \gamma(8) \\ 
% & = 10[d_{e}(1-2) + d_{e}(1-1)] + 5[d_{e}(2-2) + d_{e}(2-1)] + [d_{e}(3-2) + d_{e}(3-1)] + 5\gamma(4) + \gamma(8) \\ 
 & = 6d_{e}(1) + d_{e}(2) + 5\gamma(4) + \gamma(8) \\ 
% & = 6[d_{e}(1-2) + d_{e}(1-1)] + [d_{e}(2-2) + d_{e}(2-1)] + 5\gamma(4) + \gamma(8) \\
 & = d_{e}(1) + 5\gamma(4) + \gamma(8) \\ 
% & = [d_{e}(1-2) + d_{e}(1-1)] + 5\gamma(4) + \gamma(8) \\
 & = 5\gamma(4) + \gamma(8). 
\end{align*}
\noindent Now, $A(8)=\{1\} , B(8)=\{1\} , A(4)=\{\} $ and $B(4)=\{1\}$  and so 
\begin{align*} 
d_{e}(8) & = 5\gamma(4) + \gamma(8) \\
 & = 5[\sigma_{0}(1) - \sigma_{0}(0)] + [\sigma_{0}(2) + \sigma_{0}(0) - \sigma_{0}(1) \\
 & = \sigma_{0}(2) + 4\sigma_{0}(1) - 5\sigma_{0}(0) \\
 & = 2 + 4(1) - 5(0) \\
 & =  6.
\end{align*} 
%%d_{e}()
%From the table of values for $d_{e}(n)$ below, we see that this is true. \\\\
%\begin{tabular}{|c|c|c|c|c|c|c|c|c|}
%\hline
%$n$ & $1$ & $2$ & $3$ & $4$ & $5$ & $6$ & $7$ & $8$  \\
%\hline
%$d_{e}(n)$ & $0$ & $0$ & $0$ & $1$ & $1$ & $2$ & $3$ & $6$ \\
%\hline
%\end{tabular}

\noindent We also note that
\begin{align*}
\sum\limits_{n = 0}^{\infty}d_{o}(n)q^{n}  & = \frac{(q^{2};q^{4})_{\infty}}{(q;q)_{\infty}}\sum\limits_{n = 1}^{\infty} \frac{q^{4n - 2}}{1 - q^{4n - 2}}  \\
                                                                   & = \frac{1}{(-q^{2}; q^{2})_{\infty}(q;q)_{\infty}}\sum\limits_{n = 1}^{\infty} \frac{q^{4n - 2}}{1 - q^{4n - 2}} \\
                                                                   & \equiv  \frac{1}{(q^{2}; q^{2})_{\infty}(q;q)_{\infty}}\left(\sum\limits_{n = 1}^{\infty} \frac{q^{2n}}{1 - q^{2n}} - \sum\limits_{n = 1}^{\infty} \frac{q^{4n}}{1 - q^{4n}}\right) \pmod{2} \\
                                                                   & \equiv  \frac{1}{ (q;q)_{\infty}^{3}}\left(\sum\limits_{n = 1}^{\infty} \frac{q^{2n}}{1 - q^{2n}} - \sum\limits_{n = 1}^{\infty} \frac{q^{4n}}{1 - q^{4n}}\right) \pmod{2}
\end{align*}
 so that
\begin{equation}\label{sigma}
\sum\limits_{n = 0}^{\infty}d_{o}(n)q^{n} \sum_{n = 0}^{\infty}q^{n(n+1)/2}  \equiv  \sum\limits_{n = 1}^{\infty} \frac{q^{2n}}{1 - q^{2n}} - \sum\limits_{n = 1}^{\infty} \frac{q^{4n}}{1 - q^{4n}} \pmod{2}
\end{equation} 
where we have used \eqref{three} for $(q;q)_{\infty}^{3}$.\\
\noindent But the right-hand side of \eqref{sigma}  is  
\begin{align*}
 & \sum_{n = 1}^{\infty}\sigma_{0}(n)q^{2n}  - \sum_{n = 1}^{\infty}\sigma_{0}(n)q^{4n}  \\
 & =  \sum_{n > 1, n \equiv 0 \pmod{4}}\sigma_{0}(n/2)q^{n} +   \sum_{n > 1, n \equiv 2 \pmod{4}}^{\infty}\sigma_{0}(n/2)q^{n} - \sum_{n > 1, n \equiv 0 \pmod{4}}^{\infty}\sigma_{0}(n/4)q^{n} \\
& =   \sum_{n > 1, n \equiv 0 \pmod{4}}(\sigma_{0}(n/2) -  \sigma_{0}(n/4))q^{n}  + \sum_{n > 1, n \equiv 2 \pmod{4}}^{\infty}\sigma_{0}(n/2)q^{n}
\end{align*}
If $n \equiv 0 \pmod{4}$, we can write $n = 2^{m}b$ for some positive  integers $m \geq 2$ and $b$ odd. Then
\begin{align*}
\sigma_{0}(n/2) -  \sigma_{0}(n/4)  & = \sigma_{0}(2^{m}b/2) -  \sigma_{0}(2^{m}b/4) \\
 & = \sigma_{0}(2^{m-1}b) -  \sigma_{0}(2^{m-2}b) \\
& = \sigma_{0}(2^{m-1})\sigma_{0}(b) -  \sigma_{0}(2^{m-2})\sigma_{0}(b) \,\,\,\,\,(\text{$\sigma_{0}$ is multiplicative})\\
 & = \sigma_{0}(b)(\sigma_{0}(2^{m-1}) -  \sigma_{0}(2^{m-2})) \\
 & =  \sigma_{0}(b)(m-(m-1)) \\
& = \sigma_{0}(b)
\end{align*}
On the other hand, if $n\equiv 2 \pmod{4}$, we have $n = 2b$ for some odd positive integer $b$. Thus,
 $$ \sigma_{0}(n/2) = \sigma_{0}(b).$$
Denoting by $v_{2}(n)$, the 2-adic valuation of $n$, we obtain the following result.
\begin{corollary} For $n> 0$, we have
$$\sum_{j = 0}^{\lfloor\frac{\sqrt{8n + 1} - 1}{2} \rfloor} d_{o}(n - j(j + 1)/2) 
\equiv
\begin{cases}
0 \pmod{2},&   \text{if $n \equiv 1 \pmod{2}$;}\\\\
\sigma_{0}\left( n/2^{v_{2}(n)} \right) \pmod{2}, & \text{if $n\equiv 0 \pmod 2$}.
\end{cases}
$$
\end{corollary}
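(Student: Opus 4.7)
The plan is to extract the coefficient of $q^n$ from both sides of the congruence \eqref{sigma} modulo~$2$. First I would observe that, since $\sum_{k\ge 0}q^{k(k+1)/2}$ is supported on triangular numbers, the coefficient of $q^n$ on the left-hand side of \eqref{sigma} is
$$\sum_{j\ge 0,\ j(j+1)/2\le n}d_o\!\left(n-\tfrac{j(j+1)}{2}\right);$$
the constraint $j(j+1)/2\le n$ rearranges to $j\le \lfloor(\sqrt{8n+1}-1)/2\rfloor$, producing precisely the sum appearing in the statement of the corollary.

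Next I would read off the coefficient of $q^n$ on the right-hand side using the divisor-function rewriting carried out just before the corollary in the excerpt: the right-hand side of \eqref{sigma} is supported on even indices, with coefficient $\sigma_0(n/2)$ when $n\equiv 2\pmod 4$ and $\sigma_0(n/2)-\sigma_0(n/4)$ when $n\equiv 0\pmod 4$. For odd $n$ this coefficient is $0$, which immediately yields the first case of the corollary.

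For even $n$, I would verify that both parity subcases collapse to $\sigma_0(b)$ with $b=n/2^{v_2(n)}$. If $n=2b$ with $b$ odd this is immediate. If $n=2^m b$ with $m\ge 2$ and $b$ odd, multiplicativity of $\sigma_0$ together with the identity $\sigma_0(2^{m-1})-\sigma_0(2^{m-2})=m-(m-1)=1$ yields $\sigma_0(b)$, as computed in the excerpt. Equating coefficients of $q^n$ on both sides of \eqref{sigma} modulo~$2$ then delivers the claimed congruences.

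The substantive modular manipulation---replacing $1/((-q^2;q^2)_\infty (q;q)_\infty)$ by $1/(q;q)_\infty^3$ mod~$2$ via $(1-x)^2\equiv 1-x^2\pmod 2$, and then applying Jacobi's identity \eqref{three} in which $(-1)^j(2j+1)\equiv 1\pmod 2$---was already performed in deriving \eqref{sigma}, so no serious obstacle remains. The only remaining work is the light parity-bookkeeping for $\sigma_0$ in the two even-$n$ subcases, after which the corollary follows by coefficient comparison.
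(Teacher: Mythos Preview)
Your proposal is correct and follows exactly the paper's approach: the paper does all of the work in the text leading up to the corollary (deriving \eqref{sigma}, rewriting the right-hand side in terms of $\sigma_0$, and simplifying the two even subcases to $\sigma_0(b)$), so that the corollary is obtained simply by comparing coefficients of $q^n$ on both sides, which is precisely what you outline.
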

\section{Generalizations}
The partition function $d_{o}(n)$ can be generalized as follows:
Let  $k, p \geq 2$  and  $0\leq r < p$ be integers and define $d_{p}(n,k,r)$ as the number of partitions of $n$ in which exactly one  part  $\equiv r \pmod{p}$ appears at least $k$ times. In this definition, parts not congruent to $r \pmod{p}$ appear with unrestricted multiplicity. Similary, let  $f_{p}(n,k,r)$ denote the number of partitions of $n$ in which parts $\equiv kr  \pmod{pk}$ form a singleton set.\\\\
If $r \neq 0$, we have
\begin{align*}
\sum_{n = 0}^{\infty}d_{p}(n,k,r)q^{n} & = \frac{(q^{r};q^{p})_{\infty}}{(q;q)_{\infty}}\sum\limits_{n = 0}^{\infty} \frac{q^{k(pn + r)}}{1 - q^{pn + r}}\prod\limits_{j \neq n, j = 0}^{\infty}(1 + q^{pj + r} + q^{2(pj + r)} + \cdots + q^{(k - 1)(pj + r)} ) \end{align*}
\begin{align*}
                                                          & = \frac{(q^{r};q^{p})_{\infty}}{(q;q)_{\infty}}\sum\limits_{n = 0}^{\infty} \frac{q^{k(pn + r)}}{(1 - q^{pn +  r})\sum\limits_{i = 0}^{k - 1}q^{(pn + r)i}}\prod\limits_{j = 0}^{\infty}\frac{1 - q^{k(pj + r)}}{1 - q^{pj + r}} \\
                                                         & =\frac{(q^{r};q^{p})_{\infty} (q^{kr} ; q^{kp})_{\infty}}{(q^{r};q^{p})_{\infty}(q;q)_{\infty}}  \sum_{n = 0}^{\infty}\frac{q^{pkn + kr}}{1 - q^{pkn + kr}} \\
                                                         & = \frac{(q^{k r} ; q^{kp})_{\infty}}{(q;q)_{\infty}}\sum_{n = 0}^{\infty}\frac{q^{pkn + kr}}{1 - q^{pkn + kr}} \\
                                                         & = \sum_{n = 0}^{\infty}f_{p}(n,k,r)q^{n}.
\end{align*}
If  $r = 0$, then
$$ \sum_{n = 0}^{\infty}d_{p}(n,k,r)q^{n}  = \frac{(q^{r};q^{p})_{\infty}}{(q;q)_{\infty}}\sum\limits_{n = 1}^{\infty} \frac{q^{k(pn + r)}}{1 - q^{pn + r}}\prod\limits_{j \neq n, j = 1}^{\infty}(1 + q^{pj + r} + q^{2(pj + r)} + \cdots + q^{(k - 1)(pj + r)} ) $$
which simplifies to
$$ \frac{(q^{k r} ; q^{kp})_{\infty}}{(q;q)_{\infty}}\sum_{n = 1}^{\infty}\frac{q^{pkn + kr}}{1 - q^{pkn + kr}} =                                                  \sum_{n = 0}^{\infty}f_{p}(n,k,r)q^{n}.$$

Thus, we have:
\begin{theorem}\label{genr}
Let $p\geq 2$.  For an integer $n \geq 1$,
$$ f_{p}(n,k,r) = d_{p}(n,k,r). $$
\end{theorem}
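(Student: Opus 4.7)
The plan is to prove Theorem \ref{genr} by computing the generating functions for $d_p(n,k,r)$ and $f_p(n,k,r)$ separately and showing they coincide, mirroring the approach used earlier for Theorems \ref{main3} and \ref{var0}.

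For $d_p(n,k,r)$, I would decompose a partition into three streams. Parts not congruent to $r \pmod p$ are unrestricted and contribute the factor $(q^r;q^p)_\infty/(q;q)_\infty$. Among parts $\equiv r\pmod p$, exactly one value $pn+r$ appears with multiplicity at least $k$, contributing $q^{k(pn+r)}/(1-q^{pn+r})$; every other part $pj+r$ with $j\neq n$ appears at most $k-1$ times, contributing $\sum_{i=0}^{k-1} q^{i(pj+r)} = (1-q^{k(pj+r)})/(1-q^{pj+r})$. The key algebraic step is the telescoping identity
\[
(1-q^{pn+r})\sum_{i=0}^{k-1} q^{i(pn+r)} = 1 - q^{k(pn+r)},
\]
which allows me to fold the missing factor into the product and rewrite the contribution of the distinguished part as $q^{k(pn+r)}/(1-q^{k(pn+r)})$, while upgrading the restricted product over $j\neq n$ to the full product $\prod_j (1-q^{k(pj+r)})/(1-q^{pj+r}) = (q^{kr};q^{kp})_\infty/(q^r;q^p)_\infty$. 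After the $(q^r;q^p)_\infty$ factors cancel, this yields
\[
\sum_{n\geq 0} d_p(n,k,r)\, q^n = \frac{(q^{kr};q^{kp})_\infty}{(q;q)_\infty}\sum_n \frac{q^{k(pn+r)}}{1-q^{k(pn+r)}}.
\]

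For $f_p(n,k,r)$, parts not $\equiv kr \pmod{pk}$ contribute $(q^{kr};q^{pk})_\infty/(q;q)_\infty$, and the singleton value $pkn+kr$ appearing with multiplicity at least one contributes $\sum_n q^{pkn+kr}/(1-q^{pkn+kr})$. Multiplying these yields exactly the simplified $d_p$ generating function, so the theorem follows. The only bookkeeping subtlety is the case split on whether $r=0$, which shifts the starting index of the sum from $n=0$ to $n=1$ in order to avoid a zero part; no genuine obstacle arises beyond the telescoping identity, and the hypothesis $p\geq 2$ ensures the distinguished residue class is a proper subclass of the positive integers so that the product splitting is well-posed.
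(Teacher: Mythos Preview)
Your proposal is correct and follows essentially the same route as the paper: both arguments set up the generating function for $d_p(n,k,r)$ via the three-way decomposition, use the telescoping identity $(1-q^{pn+r})\sum_{i=0}^{k-1}q^{i(pn+r)}=1-q^{k(pn+r)}$ to fold the missing factor into the full product $(q^{kr};q^{kp})_\infty/(q^r;q^p)_\infty$, cancel the $(q^r;q^p)_\infty$, and then recognise the result as the generating function for $f_p(n,k,r)$, with the $r=0$ versus $r\neq 0$ index shift handled just as the paper does.
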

\noindent A bijection (which extends the one for $d_{e}(n)/f_{0}(n)$  and $d_{o}(n)/f_{2}(n)$), is given as follows: \\\\
\noindent Let $\lambda = \left(\lambda_{1}^{m_{1}}, \lambda_{2}^{m_{2}}, \hdots, \lambda_{l}^{m_{l}}\right)$ be a partition enumerated by $f_{p}(n,k,r)$. Then 
$$
\lambda_{i}^{m_{i}} 
\mapsto
 \begin{cases}
\left(\dfrac{\lambda_{i}}{k}\right)^{km_{i}}, & \text{if $\lambda_{i} \equiv 0,k,2k, \ldots, (p-1)k \pmod{pk}$}; \\\\
\phi_{k}^{-1}(\lambda_{i}^{m_{i}}), & \text{otherwise}.
\end{cases}
$$
From $\lambda$, note that there is only one part  congruent to $kr \pmod{pk}$ (with unrestricted multiplicity). Under the bijection, this part is mapped to the exactly one part which is congruent to $r\pmod{p}$ repeated at least $k$ times. For other parts  of $\lambda$ that are not congruent to $ kr \pmod{kp}$, there are two cases.\\
\noindent Case 1:  If the part is divisible by $k$, it can be shown that such a part must be congruent to $jk \pmod{pk}$ where $j\in \{0,1,2,\ldots, p - 1\}\setminus \{ r \}$. This part is mapped to a part not congruent to $r \pmod{p}$ with multiplicity at least $k$.  \\
\noindent Case 2: If that part of $\lambda$  is not divisible by $k$, we apply $\phi_{k}^{-1}$.\\\\
\noindent Conversely, let $\mu = \left(\mu_{1}^{s_{1}}, \mu_{2}^{s_{2}}, \hdots, \mu_{l}^{s_{l}} \right)$ be a partition enumerated by $d_{p}(n,k,r)$. Then 
\begin{equation}\label{inverse}
\mu_{i}^{s_{i}} \mapsto  (k\mu_{i})^{\lfloor \frac{s_i}{k}\rfloor } \cup \phi_{k}(\mu_{i}^{ s_i - k\lfloor \frac{s_i}{k}\rfloor })
\end{equation}
represents the inverse mapping.

\begin{example}
Consider $n = 9$, $p = 3$, $k = 4$ and $r = 1$.
\end{example}
\noindent The $d_{3}(9,4,1)$-partitions are:
$$ (5, 1^{4}), (4,1^{5}), (3, 2, 1^{4}), (3, 1^{6}), (2^{2}, 1^{5}), (2, 1^{7}),  (1^{9}).$$ 
To find the image of $(1^{9})$, we perform the transformation: $$\left\lfloor \frac{9}{4} \right\rfloor = 2\,\,\,\text{and}\,\,\, 9 - 4\left\lfloor \frac{9}{4} \right\rfloor = 1.$$  Applying the inverse map in \eqref{inverse} yields $$ 1^{9} \mapsto (4^{2},1).$$ 
\noindent Similarly, we have
\[ (5, 1^{4}) \mapsto (5, 4) \]
\[ (4, 1^{5})  \mapsto (4, 1^{5}) \]
\[ (3, 2, 1^{4}) \mapsto (4, 3, 2) \]
\[ (3, 1^{6}) \mapsto (4, 3, 1^{2}) \]
\[ (2^{2}, 1^{5}) \mapsto (4, 2^{2}, 1) \]
\[ (2, 1^{7}) \mapsto (4, 2, 1^{3}).\]

\subsection{Connection with the work of Fu and Tang}
Recall that $o_{k}(n)$ denotes the number of partitions of $n$ in which the set of parts congruent to 0 modulo $k$ is singleton.  Fu and Tang \cite{fu} showed that  if $d_{k}(n)$ is the number of partitions of $n$ where exactly one part is repeated at least $k$ times, then
\begin{equation}\label{fueq}
o_{k}(n) = d_{k}(n).
\end{equation} 
\noindent Setting $k = 4$ in \eqref{fueq}, $o_{4}(n) = d_{4}(n)$. On the other hand, we have shown that $f_{0}(n) = d_{e}(n)$ in Theorem \ref{var0}. Since $f_{0}(n) = o_{4}(n)$, we have the identity
\begin{equation}\label{ident3}
d_{4}(n) = d_{e}(n).
\end{equation}
Note that \eqref{ident3} is not \lq trivial\rq . As a preparation for a more generalized result, we describe its bijective proof.\\
\noindent Let $\lambda$ be enumerated by $d_{4}(n)$ and $j$ be the part that is repeated at least 4 times.  Then $ \lambda = \bar{\lambda} \cup j^{m}$ where $m$ is the multiplicity of $j$ in $\lambda$ and $\bar{\lambda}$ is the subpartition of $\lambda$ consisting of parts that are repeated at most 3 times.   Write $m$ as
$$ m = 4q + i\,\,\,\,\,\text{where}\,\,\,\, 0\leq  i\leq 3 \,\,\,\text{and}\,\,\,\, q \geq 1.$$
One writes $j^{m} =  j^{4q} \cup j^{i}$ and converts $j^{4q}$ into $(2j)^{2q}$. \\
Apply Glaisher map $\phi_{4}$ on $\bar{\lambda}\cup j^{i}$ so that the image $\phi_{4}(\bar{\lambda}\cup j^{i})$ is a partition into parts not divisible by 4. \\
Decompose the partition as  $$\phi_{4}(\bar{\lambda}\cup j^{i}) = \lambda^{\prime} \cup \lambda^{\prime\prime}$$ where  $\lambda^{\prime}$  is the subpartition consisting of odd parts and 
$\lambda^{\prime\prime}$ is the subpartition consisting of parts $\equiv 2 \pmod{4}$. Divide each part of $\lambda^{\prime\prime}$ by 2 and apply $\phi_{2}^{-1}$ on the resulting partition.  What follows is a partition into distinct parts. Then multiply every part of this partition by 2 and call the resulting partition $\beta$. \\
Note that
$$(2j)^{2q}\cup \beta \cup \lambda^{\prime}$$ is a partitition enumerated by $d_{e}(n)$.\\\\
The inverse will be demonstrated in the general map. We require the following lemma in the subsequent work.
\begin{lemma}\label{lem}
Let $p$, $k$ and $x$ be positive integers such that $pk \geq 2$. Suppose that $x \not\equiv 0 \pmod{pk}$ and  $x \equiv 0 \pmod{p}$. Then
$\frac{x}{p} \not\equiv 0 \pmod{k}$.
\end{lemma}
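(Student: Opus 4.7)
The plan is to prove Lemma \ref{lem} by contrapositive, since the hypothesis and conclusion are both divisibility statements about the same integer $x$, and the contrapositive version is an immediate one-line consequence of the definition of divisibility.

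First I would set up notation: from $x \equiv 0 \pmod{p}$ write $x = pn$ for some positive integer $n$, noting $n = x/p$. The goal becomes to show $k \nmid n$, under the assumption $pk \nmid x$. I would then assume the contrary, namely $k \mid n$, and derive a contradiction. Writing $n = km$ for some positive integer $m$, I immediately get $x = pn = pkm$, so $pk \mid x$, which directly contradicts the hypothesis $x \not\equiv 0 \pmod{pk}$.

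The entire argument is a chain of divisibility substitutions, so there is essentially no obstacle; the only thing to be careful about is that the integer $n = x/p$ is genuinely an integer (which is exactly what $p \mid x$ provides), so that talking about $n \pmod{k}$ makes sense. The condition $pk \geq 2$ in the statement is only there to make the modulus nontrivial and plays no role in the deduction itself.
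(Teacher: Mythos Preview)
Your proof is correct; the paper in fact omits the proof of this lemma entirely (it is stated without proof, as it is essentially a one-line tautology), and your contrapositive argument is precisely the natural justification one would supply.
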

%\begin{proof}
%Since  $x \equiv 0 \pmod{p}$, we have 
%$$ x = ip + pkt_{i}\,\,\,\,\text{with}\,\,\, i = 1,2,3, \ldots, k - 1, \,\,\,t_{i} \in \mathbb{Z}. $$
%Thus
%$\frac{x}{p} = i + kt_{i}$ which implies 
%$$\frac{x}{p} \equiv 1,2,3, \ldots, k - 1 \pmod{k}$$
%and the result follows.
%\end{proof}
Now, setting $r = 0$ in Theorem \ref{genr}, observe that $d_{p}(n,k,0)$ is the number of partitions of $n$  in which exactly one part $\equiv 0 \pmod{p}$ appears at least $k$ times. By the above result of Fu and Tang, the following generalization of  \eqref{ident3} follows:
\begin{theorem}
For $n\geq 0$, 
$$d_{pk}(n) = d_{p}(n,k,0).$$
\end{theorem}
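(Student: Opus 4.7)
The plan is to derive this identity by chaining two results that are already in hand, rather than attacking it from scratch. The key observation is that $o_{pk}(n)$ (as defined just before \eqref{fueq}) and $f_{p}(n,k,0)$ (from the previous subsection) are literally the same counting function: both enumerate partitions of $n$ in which the set of parts $\equiv 0 \pmod{pk}$ is a singleton. So I would open the proof by pointing this out; no computation is required for that step.

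With the identification $o_{pk}(n) = f_{p}(n,k,0)$ granted, the theorem is a two-link chain. First I invoke \eqref{fueq} of Fu and Tang, giving $d_{pk}(n) = o_{pk}(n)$. Then I invoke the $r=0$ case of Theorem \ref{genr}, which reads $f_{p}(n,k,0) = d_{p}(n,k,0)$. Concatenating,
\[
d_{pk}(n) \;=\; o_{pk}(n) \;=\; f_{p}(n,k,0) \;=\; d_{p}(n,k,0),
\]
which is exactly the claim. The substantive content has already been absorbed by the earlier theorems, so this analytic/combinatorial proof is a one-line derivation once the definitional coincidence is flagged.

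If one instead wants a direct bijective proof (which seems to be why Lemma \ref{lem} has just been recorded), the natural route is to generalize the bijection displayed for $d_{4}(n) = d_{e}(n)$. Given $\lambda \in d_{pk}(n)$ with special part $j$ of multiplicity $m = pk\,q + i$ ($0 \le i < pk$, $q \ge 1$), one would (i) replace $j^{pk\,q}$ by $(pj)^{kq}$, (ii) apply $\phi_{pk}$ to $\bar{\lambda} \cup j^{i}$ to remove all multiples of $pk$, (iii) split the image into the subpartition $\lambda'$ of parts with $p \nmid \lambda'_i$ and the subpartition $\lambda''$ of parts divisible by $p$ but not by $pk$, and (iv) divide $\lambda''$ by $p$, apply $\phi_{k}^{-1}$, and multiply back by $p$. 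I expect the main obstacle to be verifying that the output lies in $d_{p}(n,k,0)$, i.e.\ that $pj$ is the unique part divisible by $p$ whose multiplicity is $\ge k$. The $\phi_{k}^{-1}$ step needs parts not divisible by $k$ as input, and it is exactly Lemma \ref{lem} that guarantees this after dividing $\lambda''$ by $p$; without it, step (iv) would be ill-defined and collisions with the special part $pj$ could not be controlled. Inverting the map is then the mirror image and should cause no surprises.
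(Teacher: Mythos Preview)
Your proposal is correct and matches the paper's approach exactly: the paper derives the identity by the same two-link chain (Theorem~\ref{genr} with $r=0$ together with Fu--Tang's \eqref{fueq}, via the definitional coincidence $o_{pk}(n)=f_{p}(n,k,0)$), and then supplies the bijective construction you outline, using Lemma~\ref{lem} at precisely the point you anticipate.
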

Our interest in this theorem is in its bijective construction, which extends the given mapping for the special case $k = 2$ and $p = 2$.\\
 \noindent Let $\lambda$ be enumerated by $d_{pk}(n)$ and $j$ be the part that is repeated at least $pk$ times.  Then $ \lambda = \bar{\lambda} \cup j^{m}$ where is the multiplicity of $j$ in $\lambda$ and $\bar{\lambda}$ is the subpartition of $\lambda$ consisting of parts that are repeated at most $pk - 1$ times.   Write $m$ as
$$ m = pkq + i\,\,\,\,\,\text{where}\,\,\,\, 0\leq  i\leq pk - 1 \,\,\,\text{and}\,\,\,\, q \geq 1.$$
One rewrites $ j^{m}$  as $j^{m} =  j^{pkq} \cup j^{i}$ and converts $j^{pkq}$ into $(pj)^{kq}$. \\
Apply Glaisher map $\phi_{pk}$ on $\bar{\lambda}\cup j^{i}$ so that the image $\phi_{pk}(\bar{\lambda}\cup j^{i})$ is a partition into parts not divisible by  $pk$. \\
Decompose  this partition as  $$\phi_{pk}(\bar{\lambda}\cup j^{i}) = \lambda^{\prime} \cup \lambda^{\prime\prime}$$ where  $\lambda^{\prime}$  is the subpartition consisting of parts $\not\equiv 0 \pmod{p}$ and 
$\lambda^{\prime\prime}$ is the subpartition consisting of parts $\equiv 0\pmod{p}$.  Divide each part of $\lambda^{\prime\prime}$ by $p$ and note that, by Lemma \ref{lem}, the resulting parts are not divisible by $k$. Apply $\phi_{k}^{-1}$ on the resulting partition.  What follows is a partition into parts that appear at most $k - 1$ times . Then multiply every part of this partition by $p$ and call the resulting partition $\beta$. \\
Note that
$$(pj)^{kq}\cup \beta \cup \lambda^{\prime}$$ is a partitition enumerated by $d_{p}(n,k,0)$.
\begin{example}
Consider $n = 433$, $p = 3$ and $k = 4$. 
\end{example}
\noindent Consider the $d_{12}(433)$-partition:
 $$ (13^{10}, 10^{5}, 7^{30}, 6^{2}, 4^{5}, 1^{11})$$
which is decomposed as:
 $$ (13^{10},10^{5},6^{2},4^{5},1^{11}) \cup (7^{30}) $$
where $j=7$ and $m=30=12\cdot 2 + 6$. Thus $7^{30} = (7^{24}) \cup (7^{6})$ and so
 $$ (7^{24}) \mapsto  (21^{8}). $$
Applying the Glaisher's map to the remaining parts yields
 $$ \phi_{12}((13^{10},10^{5},7^{6},6^{2},4^{5},1^{11}) = (13^{10},10^{5},7^{6},6^{2},4^{5},1^{11}). $$
Note that $6 \equiv 0 \pmod{3}$ and thus
 $$ \beta = 3 \times \phi_{4}^{-1}\left(\frac{6}{3},\frac{6}{3}\right) = (6^{2}). $$ 
Taking the union of all the image parts gives
 $$(21^{8},13^{10},10^{5},6^{2},4^{5},1^{11})$$
which is a $d_{3}(433,4,0)$-partition.  \\\\
\noindent The inverse is described as follows: \\
Let $\mu$ be enumerated by  $d_{p}(n,k,0)$. Decompose the partition  as  $$\mu = s^{m} \cup \beta \cup \lambda^{\prime}$$ where $s$ is that one part  $\equiv 0 \pmod p$ whose multiplicity $m$ is at least $k$ times, $\beta$ is the subpartition consisting of parts $ 
\equiv 0 \pmod{p} $ whose multiplicities are $\leq k - 1$ and $\lambda^{\prime}$ is the subpartition consisting of parts $\not\equiv 0 \pmod{p}$.
Write $m$ as
$$ m = kt + f\,\,\,\,\,\text{where}\,\,\,\, 0\leq  f\leq k - 1 \,\,\,\text{and}\,\,\,\, t \geq 1.$$
One rewrites $ s^{m}$  as $s^{m} =  s^{kt} \cup s^{f}$ and converts $s^{kt}$ into $\left(\frac{s}{p}\right)^{pkt}$.  \\
\noindent Divide each part of $\beta \cup s^{f}$  by $p$ and then apply $\phi_{k}$ on the resulting partition. Then multiply every part of the obtained partition by $p$ and denote the resulting partition  by $\mu^{\prime}$. Note  that $\mu^{\prime}$ is a partition whose parts are not divisible by $pk$  but divisible by $p$. Thus the parts  of $\lambda^{\prime} \cup \mu^{\prime}$  are not divisible by $pk$. Now compute 
 $$ \mu^{\prime\prime} = \phi_{pk}^{-1}(\lambda^{\prime} \cup \mu^{\prime} )$$
so that  $\mu^{\prime\prime}$ is a partition with parts appearing at most  $pk- 1$ times.
Clearly,
$$ \left(\frac{s}{p}\right)^{pkt}\cup \mu^{\prime\prime}$$ is a partition enumerated by $d_{pk}(n)$. \\\\

\noindent Reversing the previous example, recall the $d_{3}(433,4,0)$-partition:  
$$ (21^{8},13^{10},10^{5},6^{2},4^{5},1^{11}).$$
We decompose it follows:
$$ (21^{8}) \cup (6^{2}) \cup (13^{10},10^{5},4^{5},1^{11}). $$
Thus,
 $$ (21^{8}) \mapsto (7^{24}),$$
 $$ 3 \times \phi_{4}\left(\frac{6}{3},\frac{6}{3}\right) = (6^{2}). $$ 
Applying the Glaisher's map to $(13^{10},10^{5},4^{5},1^{11}) \cup (6^{2})$ gives 
$$ \phi_{12}^{-1}(13^{10},10^{5},6^{2},4^{5},1^{11}) = (13^{10},10^{5},7^{6},6^{2},4^{5},1^{11}). $$ 
Now, taking the union of all the image parts yields
 $$ (13^{10},10^{5},7^{30}, 6^{2},4^{5},1^{11}) $$ 
which is a $d_{12}(433)$-partition.

\subsection{A slightly different partition function}
For $\alpha \geq k$, let $g(n,\alpha, k,p)$ denote  the number of partitions of $n$ in which only one part appears at least  $\alpha$ times  and its multiplicity is congruent to $\alpha + j$ modulo $p$ where  $0\leq j \leq k - 1$ and all other parts appear at most $k - 1$ times. Denote by $g_{o}(n,\alpha, k,p)$ (resp. $g_{e}(n,\alpha, k,p)$ the number of $g(n,\alpha, k,p)$-partitions in which the part repeated at least $\alpha$ times is odd (resp. even). Then we have:
\begin{theorem}
\begin{equation}\label{big}
\sum\limits_{n = 0}^{\infty}g_{o}(n,\alpha, k,p) - g_{e}(n,\alpha, k,p)q^{n} = \frac{(q^{k};q^{k})_{\infty}}{(q;q)_{\infty}} \sum_{n  =0 }^{\infty}\frac{q^{pn + \alpha}}{1 + q^{pn + \alpha}}.
\end{equation}
\end{theorem}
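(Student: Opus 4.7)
The plan is to build the bivariate generating function
\[
G(z,q)=\sum_{n\geq 0}\sum_{m\geq 1}z^{m}\bigl(\text{number of $g(n,\alpha,k,p)$-partitions whose distinguished part equals }m\bigr)q^{n},
\]
so that the tracking variable $z$ marks the (unique) part appearing at least $\alpha$ times. Since $z^{m}=1$ when $m$ is even and $-1$ when $m$ is odd, the specialization $z=-1$ collapses the sum to $g_{e}(n,\alpha,k,p)-g_{o}(n,\alpha,k,p)$, and therefore the left-hand side of \eqref{big} is $-G(-1,q)$.

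First I would compute the contribution of the distinguished part. If it equals $m$, its multiplicity has the form $\alpha+tp+j$ with $t\geq 0$ and $0\leq j\leq k-1$, so the generating function for this part is
\[
\sum_{t\geq 0}\sum_{j=0}^{k-1}z^{m}q^{m(\alpha+tp+j)}=\frac{z^{m}q^{m\alpha}(1-q^{mk})}{(1-q^{mp})(1-q^{m})}.
\]
Next, every other part $j\neq m$ is allowed with multiplicity $0,1,\ldots,k-1$, contributing $\prod_{j\neq m}(1-q^{jk})/(1-q^{j})$. Factoring out the full product gives
\[
\prod_{j\neq m}\frac{1-q^{jk}}{1-q^{j}}=\frac{(q^{k};q^{k})_{\infty}}{(q;q)_{\infty}}\cdot\frac{1-q^{m}}{1-q^{mk}}.
\]
Multiplying the two contributions causes the $(1-q^{m})$ and $(1-q^{mk})$ factors to cancel cleanly, yielding the compact identity
\[
G(z,q)=\frac{(q^{k};q^{k})_{\infty}}{(q;q)_{\infty}}\sum_{m=1}^{\infty}\frac{z^{m}q^{m\alpha}}{1-q^{mp}}.
\]

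To finish, set $z=-1$ and take the negative:
\[
-G(-1,q)=\frac{(q^{k};q^{k})_{\infty}}{(q;q)_{\infty}}\sum_{m=1}^{\infty}\frac{(-1)^{m+1}q^{m\alpha}}{1-q^{mp}}.
\]
It remains to verify that this coincides with the right-hand side of \eqref{big}. For that, expand $(1+q^{pn+\alpha})^{-1}$ as a geometric series:
\[
\sum_{n=0}^{\infty}\frac{q^{pn+\alpha}}{1+q^{pn+\alpha}}=\sum_{n=0}^{\infty}\sum_{m=1}^{\infty}(-1)^{m-1}q^{m(pn+\alpha)}=\sum_{m=1}^{\infty}(-1)^{m+1}q^{m\alpha}\sum_{n=0}^{\infty}q^{mpn},
\]
and the inner sum equals $1/(1-q^{mp})$, matching the expression above.

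The only delicate point is the clean cancellation in step three, which relies on the multiplicities of the distinguished part filling out exactly the arithmetic progressions $\{\alpha+tp+j:0\leq j\leq k-1,t\geq 0\}$; the remaining steps are routine geometric-series manipulations and a sign bookkeeping at $z=-1$.
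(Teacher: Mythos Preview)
Your proof is correct and follows essentially the same route as the paper: build the bivariate generating function with $z$ tracking the distinguished part, simplify to $\dfrac{(q^{k};q^{k})_{\infty}}{(q;q)_{\infty}}\sum_{m\geq 1}\dfrac{z^{m}q^{\alpha m}}{1-q^{pm}}$, set $z=-1$, and then swap the order of summation to reach the stated right-hand side. The only cosmetic difference is that you package the $j$-sum as the closed form $(1-q^{mk})/(1-q^{m})$ and cancel it against the missing factor from the full product, whereas the paper leaves the $j$-sum as $\sum_{i=0}^{k-1}q^{in}$ and absorbs it directly into $\prod_{j\neq n}$; the underlying algebra is identical.
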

\begin{proof}
If $z$ tracks the part repeated at least $\alpha$ times, observe that
\begin{align*}
 & \sum_{m = 0}^{\infty}\sum_{n = 0}^{\infty}g(n,\alpha, k,p) q^{n}z^{m}\\
  & = \sum_{n = 1}^{\infty}\left( z^{n}q^{\alpha n} + z^{n}q^{(\alpha + p)n} + z^{n}q^{(\alpha + 2p)n} +z^{n}q^{(\alpha + 3p)n} +  z^{n}q^{(\alpha + 4p)n} + \right.\\
& \left.\,\,\,\,\,\,\,\,\,\,\,\,\,\,\,  \cdots   + z^{n}q^{(\alpha + 1 + p)n} + z^{n}q^{(\alpha + 1 + 2p)n}  + z^{n}q^{(\alpha +1 + 3p)n} + z^{n}q^{(\alpha +1 + 4p)n} +  \right. \\
& \left. \,\,\,\,\,\,\,\,\,\,\,\,\,\,\,  \cdots  + z^{n}q^{(\alpha + 2 + p)n} + z^{n}q^{(\alpha + 2 + 2p)n}  + z^{n}q^{(\alpha +2 + 3p)n} + z^{n}q^{(\alpha +2 + 4p)n} +  \right. \\
& \left. \,\,\,\,\,\,\,\,\,\,\,\,\,\,\,  \cdots  + z^{n}q^{(\alpha + 3 + p)n} + z^{n}q^{(\alpha + 3 + 2p)n}  + z^{n}q^{(\alpha +3 + 3p)n} + z^{n}q^{(\alpha +3 + 4p)n} +  \right. \\
& \left.  \,\,\,\,\,\,\,\,\,\,\,\,\,\,\,\,\,\,\,\,\,\,\,\,\,\,\, \vdots \right. \\
& \left. \,\,\,\,\,\,\,\,\,\,\,\,\,\,\,  \cdots  + z^{n}q^{(\alpha + k - 2 + p)n} + z^{n}q^{(\alpha + k - 2 + 2p)n}  + z^{n}q^{(\alpha + k- 2 + 3p)n} + z^{n}q^{(\alpha + k- 2 + 4p)n} +  \right. \\
& \left. \,\,\,\,\,\,\,\,\,\,\,\,\,\,\,  \cdots  + z^{n}q^{(\alpha + k - 1 + p)n} + z^{n}q^{(\alpha + k - 1 + 2p)n}  + z^{n}q^{(\alpha +k - 1 + 3p)n} + z^{n}q^{(\alpha + k - 1 + 4p)n} +  \right. \\
& \left. \,\,\,\,\,\,\,\,\,\,\,\,\,\,\,  \cdots  \right) \prod_{j = 1, j \neq n}^{\infty}(1 + q^{j} + q^{2j} + q^{3j} + \cdots + q^{(k -1)j}) \\
& = \sum_{n = 1}^{\infty}z^{n}\sum_{i = 0}^{k - 1}\sum_{m = 0}^{\infty}q^{(\alpha + i + mp)n}\prod_{j = 1, j \neq n}^{\infty}(1 + q^{j} + q^{2j} + q^{3j} + \cdots + q^{(k -1)j}) \\
& = \sum_{n = 1}^{\infty}z^{n}q^{\alpha n}\sum_{i = 0}^{k - 1}q^{in}\sum_{m = 0}^{\infty}q^{mpn}\prod_{j = 1, j \neq n}^{\infty}(1 + q^{j} + q^{2j} + q^{3j} + \cdots + q^{(k -1)j}) \\
& = \sum_{n = 1}^{\infty}\frac{z^{n}q^{\alpha n}}{1 - q^{pn}}\sum_{i = 0}^{k - 1}q^{in}\prod_{j = 1, j \neq n}^{\infty}(1 + q^{j} + q^{2j} + q^{3j} + \cdots + q^{(k -1)j}).  \\
& = \sum_{n = 1}^{\infty}\frac{z^{n}q^{\alpha n}}{1 - q^{pn}}\prod_{j = 1}^{\infty}(1 + q^{j} + q^{2j} + q^{3j} + \cdots + q^{(k -1)j}).  \\
& = \sum_{n = 1}^{\infty}\frac{z^{n}q^{\alpha n}}{1 - q^{pn}}\prod_{j = 1}^{\infty}\frac{1 - q^{kj}}{1 - q^{j}} \\
& = \frac{(q^{k};q^{k})_{\infty}}{(q;q)_{\infty}}\sum_{n = 1}^{\infty}\frac{z^{n}q^{\alpha n}}{1 - q^{pn}}
\end{align*} 
so that 
\begin{align*}
\sum\limits_{n = 0}^{\infty}(g_{o}(n,\alpha, k,p) - g_{e}(n,\alpha, k,p))q^{n} & = - \frac{(q^{k};q^{k})_{\infty}}{(q;q)_{\infty}}\sum_{n = 1}^{\infty}\frac{(-1)^{n}q^{\alpha n}}{1 - q^{pn}}\\
                                                                                                                  & = - \frac{(q^{k};q^{k})_{\infty}}{(q;q)_{\infty}}\sum_{n = 1}^{\infty}(-1)^{n}q^{\alpha n}\sum_{m = 0}^{\infty}q^{pnm} \\
                                                                                                                  & = - \frac{(q^{k};q^{k})_{\infty}}{(q;q)_{\infty}}\sum_{m = 0}^{\infty}\sum_{n = 1}^{\infty}(-q^{\alpha + pm })^{n} \end{align*}
\begin{align*}
                                                                                                           & = - \frac{(q^{k};q^{k})_{\infty}}{(q;q)_{\infty}}\sum_{m = 0}^{\infty} \frac{-q^{\alpha + pm }}{1 + q^{\alpha + pm }} \\
                                                                                                                  & =  \frac{(q^{k};q^{k})_{\infty}}{(q;q)_{\infty}}\sum_{m = 0}^{\infty} \frac{q^{\alpha + pm }}{1 + q^{\alpha + pm }}.
\end{align*}
\end{proof}
We have the following relationship between $h(n,p)$ and $g(n,p,p,p)$.
\begin{proposition}\label{gh}
For all $n\geq 0$, 
$$ g_{o}(n,p, p,p)  =  h_{0}(n,p) \,\,\,\text{and}\,\,\,  g_{e}(n,p, p,p) = h_{p}(n,p).$$
\end{proposition}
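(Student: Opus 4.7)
The plan is a short generating-function argument: specialise the bivariate series used in the proof of the preceding theorem to $\alpha = k = p$, split by the parity of the (repeated) part via $z = \pm 1$, and recognise the two resulting $q$-series as those already computed in \eqref{later1} and \eqref{later2} for $\sum_n o_{p,o}(n)q^n$ and $\sum_n o_{p,e}(n)q^n$. The identifications $o_{p,o}(n) = h_p(n,p)$ and $o_{p,e}(n) = h_0(n,p)$ established right after \eqref{later2} will then close the argument.

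More concretely, the bivariate generating function derived in the proof of the preceding theorem (before setting $z$ to $\pm 1$) is
$$F(z,q) \;=\; \frac{(q^k;q^k)_\infty}{(q;q)_\infty}\sum_{n=1}^{\infty}\frac{z^n q^{\alpha n}}{1-q^{pn}},$$
with the exponent of $z$ recording the value of the part repeated at least $\alpha$ times. Setting $\alpha = k = p$ and using the standard projections
$\sum_n g_o(n,p,p,p)\,q^n = \tfrac{1}{2}\bigl(F(1,q) - F(-1,q)\bigr)$
and
$\sum_n g_e(n,p,p,p)\,q^n = \tfrac{1}{2}\bigl(F(1,q) + F(-1,q)\bigr)$
retains only the terms with $n$ odd and $n$ even respectively. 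Substituting $n = 2m+1$ and $n = 2m$ rewrites these two series as
$$\frac{(q^p;q^p)_\infty}{(q;q)_\infty}\sum_{m=0}^{\infty}\frac{q^{p+2pm}}{1-q^{p+2pm}} \qquad\text{and}\qquad \frac{(q^p;q^p)_\infty}{(q;q)_\infty}\sum_{m=1}^{\infty}\frac{q^{2pm}}{1-q^{2pm}},$$
which are exactly the right-hand sides of \eqref{later1} and \eqref{later2}.

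No step is really an obstacle; the argument is a mechanical chain of generating-function identities supported entirely by earlier computations in the paper. The only point requiring care is the matching of labels---the parity of the \emph{repeated part} on the $g$-side against the subscript of $h_i$ on the other side (which, by \eqref{later1}--\eqref{later2}, encodes the parity of the multiplicity of the distinct multiple of $p$ in an $o_p(n)$-partition). This pairing is however forced entirely by which residues of $n$ survive the $\pm 1$ evaluation, so once the two generating functions are lined up the conclusion is immediate.
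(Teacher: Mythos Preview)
Your approach is exactly the paper's: specialise the bivariate series from the preceding theorem to $\alpha=k=p$, project onto odd/even part value, and match the resulting $q$-series with \eqref{later1} and \eqref{later2}. The paper does the projection by simply restricting the sum to $n$ odd or $n$ even rather than via $\tfrac12(F(1,q)\mp F(-1,q))$, but that is the same manoeuvre.

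One caution on the final bookkeeping. Following your chain literally, the $g_o$ series is \eqref{later1}, which the paper identifies with $\sum_n o_{p,o}(n)q^n$, and the line after \eqref{later2} records $o_{p,o}(n)=h_{p}(n,p)$. So your argument yields $g_o(n,p,p,p)=h_{p}(n,p)$ and $g_e(n,p,p,p)=h_{0}(n,p)$, which is the \emph{swap} of the pairing stated in the Proposition. The paper's own proof computes the very same series $\frac{(q^p;q^p)_\infty}{(q;q)_\infty}\sum_{n\ge1}\frac{q^{2pn-p}}{1-q^{2pn-p}}$ for $g_o$ and then labels it $\sum_n h_0(n,p)q^n$, so the two places in the paper disagree on the subscript. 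Your derivation is sound; just flag explicitly which label convention you are using so that the reader is not confused by the apparent inconsistency.
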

\begin{proof}
From the fact that  $$ \sum_{m = 0}^{\infty}\sum_{n = 0}^{\infty}g(n,p, p,p) q^{n}z^{m} = \frac{(q^{p};q^{p})_{\infty}}{(q;q)_{\infty}}\sum_{n = 1}^{\infty}\frac{z^{n}q^{pn}}{1 - q^{pn}},$$
recall that $z$ tracks the part which is repeated at least $p$ times. Then
 \begin{align*}
\sum_{n = 0}^{\infty}g_{o}(n,p, p,p)q^{n} 
                                                                             & = \frac{(q^{p};q^{p})_{\infty}}{(q;q)_{\infty}}\sum_{n = 1, n \equiv 1 \pmod{2}}^{\infty} \frac{q^{pn }}{1 - q^{pn }}\\
                                                                             & =  \frac{(q^{p};q^{p})_{\infty}}{(q;q)_{\infty}}\sum_{n = 1}^{\infty} \frac{q^{2pn - p }}{1 - q^{2pn - p }}\\
                                                                             & = \sum_{n = 0}^{\infty}h_{0}(n,p) q^{n}
\end{align*}
and on the same note,
\begin{align*}
\sum_{n = 0}^{\infty}g_{e}(n,p, p,p)q^{n} 
                                                                              & = \frac{(q^{p};q^{p})_{\infty}}{(q;q)_{\infty}}\sum_{n = 1, n \equiv 0 \pmod{2}}^{\infty} \frac{q^{pn }}{1 - q^{pn }}\\
                                                                             & =  \frac{(q^{p};q^{p})_{\infty}}{(q;q)_{\infty}}\sum_{n = 1}^{\infty} \frac{q^{2pn }}{1 - q^{2pn  }}\\
                                                                             & = \sum_{n = 0}^{\infty}h_{p}(n,p) q^{n}.
\end{align*}
\end{proof}
%\section{Declarations}
%\subsection{Competing Interests and/or Funding}
%On behalf of all authors, the corresponding author states that there is no conflict of interest. No funding was received for conducting this study. The authors have no financial or proprietary interests in any material discussed in this article. 
%\subsection{ Data Availability Statement}
%Authors can confirm that this manuscript has no associated data.

\end{document}